\theoremstyle{plain}
\newtheorem{theorem}{Theorem}[section]
\newtheorem{lemma}[theorem]{Lemma}
\newtheorem{proposition}[theorem]{Proposition}
\newtheorem{corollary}[theorem]{Corollary}
\newtheorem*{BSEtheorem}{Theorem \ref{thm:BSE theorem}}
\newtheorem*{maintheorem}{Theorem \ref{thm:main theorem}}
\newtheorem{claim}{Claim}
\theoremstyle{definition}
\newtheorem{definition}[theorem]{Definition}
\theoremstyle{remark}
\newtheorem{remark}{Remark}
\numberwithin{equation}{section}
\def\@citestyle{\m@th\upshape\mdseries}
\def\citeform#1{{\bfseries#1}}
\def\@cite#1#2{{%
  \@citestyle[\citeform{#1}\if@tempswa, #2\fi]}}
\let\csname cite \endcsname\cite
  \edef\cite{\@nx\protect\@xp\@nx\csname cite \endcsname}%
\renewcommand{\leq}{\leqslant}
\renewcommand{\geq}{\geqslant}
\newcommand{\inner}[2]{\langle #1\,,#2\rangle}
\newcommand{\biginner}[2]{\left\langle #1\,,\,#2\right\rangle}
\newcommand{\ddl}[2]{\frac{d{#1}}{d{#2}}}
\newcommand{\ppl}[2]{\frac{\partial{#1}}{\partial{#2}}}
\newcommand{\ppz}[2]{\frac{\partial^2{#1}}{\partial{#2}^2}}
\newcommand{\C}{\mathbb{C}}
\newcommand{\R}{\mathbb{R}}
\newcommand{\B}{\mathbb{B}}
\renewcommand{\H}{\mathbb{H}}
\newcommand{\T}{\mathbf{T}}
\DeclareMathOperator{\dist}{dist}%
\DeclareMathOperator{\Area}{Area}%
\DeclareMathOperator{\Length}{Length}%
\DeclareMathOperator{\Mob}{M\text{\"o}b}%
\DeclareMathOperator{\Ric}{Ric}%
\newcommand{\CC}{\mathcal{C}}
\newcommand{\Lcal}{\mathcal{L}}
\newcommand{\Nscr}{\mathscr{N}}
\begin{document}

\title{Least Area Spherical Catenoids in Hyperbolic
Three-Dimensional Space}

\author{Biao Wang}
\date{\today}

\subjclass{Primary 53A10, Secondary 53C42}
\address{Department of Mathematics and Computer Science\\
           QCC, The City University of New York\\
           222-05 56th Avenue Bayside, NY 11364\\}
\email{biwang@qcc.cuny.edu}

\maketitle

\begin{abstract}
  For a family of spherical minimal catenoids $\{\CC_a\}_{a>0}$
  in the $3$-dimensional hyperbolic space $\H^3$ (see
  $\S$\ref{subsec:catnoids} for detail definitions),
  there exists two constants $0<a_c<a_l$ such that
  the following are true:
  \begin{itemize}
     \item $\CC_a$ is an unstable minimal surface with index
           one if $a<a_c$,
     \item $\CC_a$ is a stable minimal surface if $a\geq{}a_c$, and
     \item $\CC_a$ is a least area minimal surface in the sense of
           Meeks-Yau (see $\S$\ref{subsec:meek-yau least area} for the
           definition) if $a\geq{}a_l$.
  \end{itemize}
\end{abstract}


\section{Introduction}\label{sec:Introduction}

Suppose that $\Sigma$ is a surface immersed
in a $3$-dimensional Riemannian manifold
$M$. We pick up a local orthonormal frame field
$\{e_1,e_2,e_3\}$ for $M$ such that, restricted to $\Sigma$,
the vectors $\{e_1,e_2\}$ are tangent to $\Sigma$ and the
vector $e_3$ is perpendicular to $\Sigma$. Let
$A=(h_{ij})_{2\times{}2}$ be the second fundamental
form of $\Sigma$, whose entries $h_{ij}$ are represented by
\begin{equation*}
   h_{ij}=\inner{\nabla_{e_i}e_{3}}{e_{j}}\ ,
   \quad{}i,j=1,2\ ,
\end{equation*}
where $\nabla$ is the covariant derivative in $M$, and
$\inner{\cdot}{\cdot}$ is the metric of $M$.

\subsection{Basic minimal surfaces}
An immersed surface $\Sigma\subset{}M$ is called a
\emph{minimal surface} if its \emph{mean curvature}
$H=h_{11}+h_{22}$ is identically zero.
For any immersed minimal surface $\Sigma$
in $M$, the {\em Jacobi operator} on $\Sigma$ is
\begin{equation}\label{eq:Jacobi operator}
   \Lcal=\Delta_{\Sigma}+(|A|^2+\Ric(e_3))\ ,
\end{equation}
where $\Delta_{\Sigma}$ is the Lapalican on $\Sigma$,
$|A|^2=\sum_{i,j=1}^{2}h_{ij}^2$ is
the length of the second fundamental form on $\Sigma$ and
$\Ric(e_3)$ is the Ricci curvature of $M$
in the direction $e_3$.

Suppose that $\Sigma$ is a complete minimal surface immersed in
a complete Riemannian $3$-manifold $M$.
For any compact connected subdomain $\Omega$ of $\Sigma$,
its first eigenvalue is defined by
\begin{equation}\label{eq:1st eigenvalue of Omega}
   \lambda_{1}(\Omega)=\inf\left\{-\int_{\Omega}f\Lcal{}f
   \ \left|\ f\in{}C_{0}^\infty(\Omega)\ \text{and}\
   \int_{\Omega}f^2=1\right.\right\}\ .
\end{equation}
We say that $\Omega$ is \emph{stable} if $\lambda_{1}(\Omega)>0$,
\emph{unstable} if $\lambda_{1}(\Omega)<0$ and
\emph{maximally weakly stable} if $\lambda_{1}(\Omega)=0$.

\begin{lemma}\label{lem:monotonicity of eigenvalue}
Suppose that $\Omega_1$ and $\Omega_2$ are connected
subdomains of $\Sigma$ with $\Omega_1\subset\Omega_2$, then
\begin{equation*}
  \lambda_{1}(\Omega_1)\geq\lambda_{1}(\Omega_2)\ .
\end{equation*}
If $\Omega_2\setminus\overline{\Omega}_1\ne\emptyset$, then
\begin{equation*}
  \lambda_{1}(\Omega_1)>\lambda_{1}(\Omega_2)\ .
\end{equation*}
\end{lemma}

\begin{remark}If $\Omega\subset\Sigma$ is maximally weakly
stable, then for any compact connected subdomains
$\Omega_1,\Omega_2\subset\Sigma$ satisfying
$\Omega_1\subsetneq\Omega\subsetneq\Omega_2$, we have that
$\Omega_1$ is stable whereas $\Omega_2$ is unstable.
\end{remark}

Let $\Omega_1\subset\Omega_2\subset\cdots\subset\Omega_n\subset\cdots$
be an exhaustion of $\Sigma$, then the first eigenvalue of
$\Sigma$ is defined by
\begin{equation}\label{eq:1st eigenvalue of Sigma}
   \lambda_{1}(\Sigma)=\lim_{n\to\infty}\lambda_{1}(\Omega_n)\ .
\end{equation}
This definition is independent of the choice of the exhaustion.
We say that $\Sigma$ is \emph{globally stable} or
\emph{stable} if $\lambda_{1}(\Sigma)>0$ and
\emph{unstable} if $\lambda_{1}(\Sigma)<0$.

The following theorem was proved by Fischer-Colbrie and Schoen in
\cite[Theorem 1]{FCS80} (see also \cite[Proposition 1.39]{CM11}).

\begin{theorem}[Fischer-Colbrie and Schoen]\label{thm:FCS80}
Let $\Sigma$ be a complete minimal surface in a complete
Riemannian $3$-manifold $M$, then $\Sigma$ is stable
if and only if there exists a positive function
$\phi:\Sigma\to\R$ such that $\Lcal{}\phi=0$.
\end{theorem}

The \emph{Morse index} of compact connected subdomain
$\Omega$ of $\Sigma$ is the number of negative eigenvalues of the
Jacobi operator $\Lcal$ (counting with multiplicity) acting on the
space of smooth sections of the normal bundle that vanishes on
$\partial\Omega$. The \emph{Morse index} of $\Sigma$ is the
supremum of the Morse indices of compact subdomains of $\Sigma$.

\subsection{Least area minimal annuli in the sense of Meeks-Yau
(\cite[p.~412]{MY1982(t)})}\label{subsec:meek-yau least area}

Suppose that $\Sigma$ is a complete minimal surface immersed in
a complete Riemannian $3$-manifold $M$. For any compact subdomain
$\Omega$ of $\Sigma$, it is said to be \emph{least area} if its
area is smaller than that of any other surface in the same
homotopic class with the same boundary as $\partial\Omega$. We
say that $\Sigma$ is a \emph{least area minimal surface} if
any compact subdomain of $\Sigma$ is least area.

Let $S$ be a compact annulus-type minimal surface immersed in a Riemannian
$3$-manifold $M$. Suppose that the boundary of $S$ is the union of two simple
closed curves $C_1,C_2$ which bound two least area minimal disks $D_1,D_2$
respectively. The annulus $S$ is called a
\emph{least area minimal surface in the sense of Meeks-Yau} in $M$ if
$S$ is a least area minimal annulus in the regular sense and
\begin{equation*}
   \Area(S)<\Area(D_1)+\Area(D_2)\ ,
\end{equation*}
where $\Area(\cdot)$ denotes the area of the surface in $M$.
A complete annulus-type minimal surface $\Sigma$ immersed in $M$ is called a
\emph{least area minimal surface in the sense of Meeks-Yau} if any annulus-type
compact subdomain of $\Sigma$, which is homotopically equivalent to $\Sigma$,
is a least area minimal surface in the sense of Meeks-Yau.

\subsection{Main statements}
Do Carmo and Dajczer studied three types of rotationally symmetric
minimal hypersurfaces in $\H^{n+1}$ in \cite{dCD83}.
A rotationally symmetric minimal hypersurface is called a
{\em spherical} catenoid if it is foliated by spheres, a
{\em hyperbolic} catenoid if it is foliated by totally geodesic
hyperplanes, and a {\em parabolic} catenoid if it is foliated
by horospheres. Do Carmo and Dajczer
proved that the hyperbolic and parabolic catenoids are
globally stable (see \cite[Theorem 5.5]{dCD83}), then Candel proved
that the hyperbolic and parabolic catenoids are least area
minimal surfaces (see \cite[p. 3574]{Can07}).

In this paper, we will study the spherical catenoids in $\H^3$.
Compared with the hyperbolic and parabolic catenoids,
the spherical catenoids are more complicated. Let $\B^2_{+}$ be the
upper half unit disk on the $xy$-plane with the warped product
metric given by \eqref{eq:warped product metric}, and let
$\sigma_a$ be the
catenary given by \eqref{eq:parametric equation of catenary},
which is symmetric
about the $y$-axis and passes through the point $(0,a)\in\B_{+}^2$,
where $a>0$ is the hyperbolic distance from the catenary $\sigma_a$ to
the origin. Let $\CC_a$ be the spherical minimal catenoid
generated by $\sigma_a$. Mori, Do Carmo and Dajczer,
B{\'e}rard and Sa Earp, and Seo proved the following result.

\begin{theorem}[\cite{BSE10,dCD83,Mor81,Seo11}]
\label{thm:stability of catenoids}
There exist two constants $A_1\approx{}0.46288$ and
$A_{2}=\frac{1}{2}\cosh^{-1}
\left(\sqrt{\frac{11+8\sqrt{2}}{7}}\right)\approx{}0.5915$
such that $\CC_a$ is unstable if $0<a<A_1$, and $\CC_a$
is globally stable if $a>A_2$.
\end{theorem}

\begin{remark}The constants $A_1$ and $A_2$ were given by Seo
in \cite[Corollary 4.2]{Seo11} and by B{\'e}rard and Sa Earp
in \cite[Lemma 4.4]{BSE10} respectively.
A few years ago, Do Carmo and Dajczer showed that $\CC_a$
is unstable if $a\lessapprox{}0.42315$ in \cite{dCD83},
and Mori showed that $\CC_a$ is stable if
$a>\cosh^{-1}(3)\approx{}1.7627$
in \cite{Mor81} (see also \cite[p. 34]{BSE09}).
\end{remark}

According to the numerical computation, B{\'e}rard and Sa Earp
claimed that $A_1=A_2$. More precisely, we have the
following theorem.

\begin{BSEtheorem}
There exists a constant $a_c\approx{}0.49577389$ such that
the following statements are true:
\begin{enumerate}
     \item $\CC_a$ is an unstable minimal surface with index
           one if $0<a<a_c$;
     \item $\CC_a$ is a globally stable minimal surface if
           $a\geq{}a_c$.
  \end{enumerate}
\end{BSEtheorem}


Similar to the case of hyperbolic and parabolic catenoids,
we want to know whether the globally stable spherical catenoids
are least area minimal surfaces.
In this paper, we prove that there exists a positive number
$a_l$ such that $\CC_a$ is a least area minimal surface
if $a\geq{}a_l$. More precisely, we will prove the following
result.

\begin{maintheorem}
There exists a constant $a_l\approx{}1.10055$ defined by
\eqref{eq:lambda_c} such that for any
$a\geq{}a_l$ the catenoid $\CC_a$ is a least area
minimal surface in the sense of Meeks-Yau.
\end{maintheorem}


\subsection{Plan of the paper}
This paper is organized as follows.
In $\S~$\ref{sec:prelim} we introduce the minimal
spherical catenoids in $\H^3$.
In $\S$~\ref{sec:proof of BSE theorem} we introduce
Jacobi fields on the catenoids
(following B{\'e}rard and Sa Earp in \cite{BSE10}) and
prove Theorem~\ref{thm:BSE theorem}.
In $\S$~\ref{sec:proof of main theorem} we prove
Theorem~\ref{thm:main theorem}.

\section{Preliminaries}\label{sec:prelim}

In this paper, we work in the Pinecar{\'e} ball model of
$\B^{3}$, i.e.,
\begin{equation*}
   \B^{3}=\{(u,v,w)\in\R^{3}\ | \ u^{2}+v^{2}+w^{2}<{}1\},
\end{equation*}
equipped with the hyperbolic metric
\begin{equation*}
   ds^{2}=\frac{4(du^{2}+dv^{2}+dw^{2})}{(1-r^{2})^{2}}\ ,
\end{equation*}
where $r=\sqrt{u^{2}+v^{2}+w^{2}}$. The hyperbolic space
$\B^{3}$ has a natural compactification:
$\overline{\B^{3}}=\B^{3}\cup{}S_{\infty}^{2}$,
where $S_{\infty}^{2}\cong\C\cup\{\infty\}$ is called the
\emph{Riemann sphere}. The orientation preserving
isometry group of $\B^3$ is denoted by $\Mob(\B^3)$, which
consists of M\"obius transformations that preserve the unit
ball $\B^{3}$ (see \cite[Theorem 1.7]{MT98}).

Let $X$ be a subset of $\B^{3}$, we define
the {\em asymptotic boundary} of $X$ by
\begin{equation}
   \partial_{\infty}X=\overline{X}
   \cap{}S_{\infty}^{2}\ ,
\end{equation}
where $\overline{X}$ is the closure of $X$ in
$\overline{\B}{}^{3}$.

Using the above notation, we have
$\partial_\infty\B^3=S_{\infty}^2$. If $P$ is a geodesic plane
in $\B^{3}$, then $P$ is perpendicular to $S_{\infty}^{2}$ and
$C\stackrel{\text{def}}{=}\partial_{\infty}P$ is
an Euclidean circle on $S_{\infty}^{2}$. We also say that
$P$ is {\em asymptotic to} $C$.

\begin{figure}[htbp]
  \begin{center}
     \includegraphics[scale=0.9]{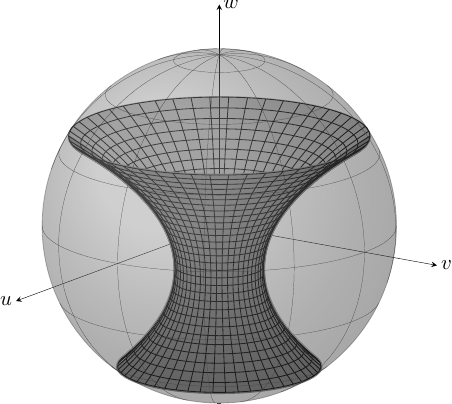}
   \end{center}
  \caption{A surface of revolution in the hyperbolic $3$-space $\B^3$
           whose rotation axis is the $w$-axis.}\label{fig:catenoid in B3}
\end{figure}

Suppose that $G$ is a subgroup of $\Mob(\B^{3})$ that leaves
a geodesic $\gamma\subset\B^{3}$ pointwise fixed. We call $G$ the
\emph{spherical group} of $\B^{3}$ and $\gamma$ the
\emph{rotation axis} of $G$. A surface in $\B^{3}$ invariant
under $G$ is called a \emph{spherical surface} or a
\emph{surface of revolution}
(see Fig.~\ref{fig:catenoid in B3}). For two circles $C_{1}$
and $C_{2}$ in $\B^{3}$, if there is a geodesic $\gamma$,
such that each of $C_{1}$ and $C_{2}$ is invariant under the
group of rotations that fixes $\gamma$ pointwise, then $C_{1}$
and $C_{2}$ are said to be \emph{coaxial}, and $\gamma$ is
called the {\em rotation axis} of $C_{1}$ and $C_{2}$.

\subsection{Minimal spherical catenoids in $\B^3$}
\label{subsec:catnoids}

In this subsection, we follow Hsiang (see \cite{BCH09,Hsi82}) to
introduce the minimal spherical catenoids in $\B^3$.

Suppose that $G$ is the spherical group of $\B^{3}$ along
the geodesic
\begin{equation}\label{eq:rotation axis}
   \gamma_{0}=\{(u,0,0)\in\B^3\ |\ -1<u<1\}\ ,
\end{equation}
then $\B^3/G\cong\B_{+}^2$, where
\begin{equation}
   \B_{+}^2=\{(u,v)\in\B^2\ |\ v\geq{}0\}\ .
\end{equation}

For any point $p=(u,v)\in\B_{+}^2$, there is a unique
geodesic segment $\gamma'$ passing through $p$ that is
perpendicular to $\gamma_{0}$ at $q$.
Let $x=\dist(O,q)$ and $y=\dist(p,q)=\dist(p,\gamma_{0})$
(see Fig.~\ref{fig:intrinsic metric}), where
$\dist(\cdot,\cdot)$ denotes the hyperbolic distance, then by
\cite[Theorem 7.11.2]{Bea95}, we have
\begin{equation}\label{eq:(x,y) in terms of (u,v)}
   \tanh{}x=\frac{2u}{1+(u^2+v^2)}
   \quad\text{and}\quad
   \sinh{}y=\frac{2v}{1-(u^2+v^2)}\ .
\end{equation}
Equivalently, we also have
\begin{equation}\label{eq:(u,v) in terms of (x,y)}
   u=\frac{\sinh{}x\cosh{}y}{1+\cosh{}x\cosh{}y}
   \quad\text{and}\quad
   v=\frac{\sinh{}y}{1+\cosh{}x\cosh{}y}\ .
\end{equation}

\begin{figure}[htbp]
  \begin{center}
     \includegraphics[scale=1]{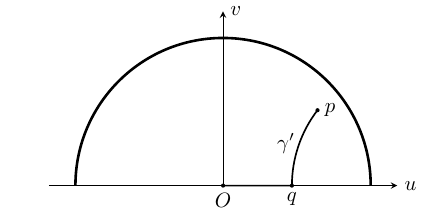}
   \end{center}
  \caption{For a point $p$ in $\B_{+}^2$ with the warped product
   metric, its coordinates $(x,y)$ are defined by $x=\dist(O,q)$
   and $y=\dist(p,q)$.}\label{fig:intrinsic metric}
\end{figure}

It's well known that $\B_{+}^2$ can be equipped with
the \emph{metric of warped product} in terms
of the parameters $x$ and $y$ as follows:
\begin{equation}\label{eq:warped product metric}
   ds^2=\cosh^{2}y\cdot{}dx^2+dy^2\ ,
\end{equation}
where $dx$ represents the hyperbolic metric on the geodesic
$\gamma_0$ in \eqref{eq:rotation axis}. We call the horizontal
geodesic $\{(u,0)\in\B_{+}^2\ |\ -1<u<1\}$ the $x$-axis and
the vertical geodesic $\{(0,v)\in\B_{+}^2\ |\ 0\leq{}v<1\}$
the $y$-axis. The orientations of the $x$-axis and the $y$-axis
are considered to be the same as that of the $u$-axis and the
$v$-axis respectively. Thus we also consider that the $x$-axis
and the $y$-axis are equivalent to the $u$-axis and the $v$-axis
respectively.

If $\CC$ is a minimal surface of revolution in $\B^3$ with
respect to the axis $\gamma_{0}$, then the curve
$\sigma=\CC\cap\B_{+}^{2}$ is called the
\emph{generating curve} of $\CC$.
Suppose that $\sigma$ is given by the parametric equations:
$x=x(s)$ and $y=y(s)$, where $s\in(-\infty,\infty)$ is an
arc length parameter of $\sigma$. By the argument in
\cite[pp. 486--488]{Hsi82}, the curve $\sigma$ satisfies the
following equations
\begin{equation}\label{eq:differential equations of Pi}
   \frac{2\pi\sinh{}y\cdot\cosh^{2}y}
   {\sqrt{\cosh^2{}y+(y')^2}}=
   2\pi\sinh{}y\cdot\cosh{}y\cdot\sin\theta=k\
   (\text{constant})\ ,
\end{equation}
where $y'=dy/dx$ and $\theta$ is the angle between the tangent
vector of $\sigma$ and the vector $e_y=\partial/\partial{}y$
at the point $(x(s),y(s))$ (see Fig.~\ref{fig:diff eq of sigma}).

\begin{figure}[htbp]
  \begin{center}
     \includegraphics[scale=0.9]{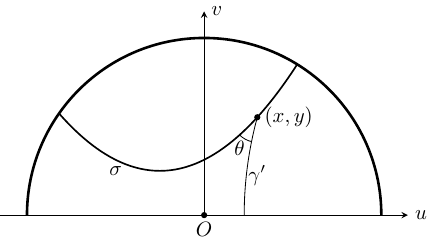}
   \end{center}
  \caption{$\theta$ is the angle between the parametrized
  curve $\sigma$ and the geodesic $\gamma'$ at the point
  $(x(s),y(s))\in\sigma\cap\gamma'$, where $\gamma'$ is
  perpendicular to the $u$-axis.}\label{fig:diff eq of sigma}
\end{figure}

By the argument in \cite[pp.54--58]{Gom87}), without loss of
generality, we assume that
the curve $\sigma$ is only symmetric about the $y$-axis
and intersects the $y$-axis orthogonally at $y_0=y(0)$,
and so $y'(0)=0$.
Substitute these to \eqref{eq:differential equations of Pi},
we get $k=2\pi\sinh(y_0)\cosh(y_0)$, and then we get
\begin{equation}\label{eq:angle-alpha}
   \sin\theta=\frac{\sinh(y_0)\cosh(y_0)}{\sinh(y)\cosh(y)}
             =\frac{\sinh(2y_0)}{\sinh(2y)}\ .
\end{equation}
Now solve $x$ in terms of $y$
from \eqref{eq:differential equations of Pi} and take the definite
integral from $y_0$ to $y$ for any $y\geq{}y_0$, we have
\begin{equation}\label{eq: catenary equation}
   x(y)=\int_{y_0}^{y}\frac{\sinh(2y_0)}{\cosh{}y}
             \frac{dy}{\sqrt{\sinh^2(2y)-\sinh^2(2y_0)}}\ .
\end{equation}
Let $y\to\infty$, we get (see Fig.~\ref{fig:Gomes distance})
\begin{equation}\label{eq:Gomes distance}
   x(\infty)=\int_{y_0}^{\infty}\frac{\sinh(2y_0)}{\cosh{}y}
             \frac{dy}{\sqrt{\sinh^2(2y)-\sinh^2(2y_0)}}\ .
\end{equation}

\begin{figure}[htbp]
  \begin{center}
     \includegraphics[scale=0.9]{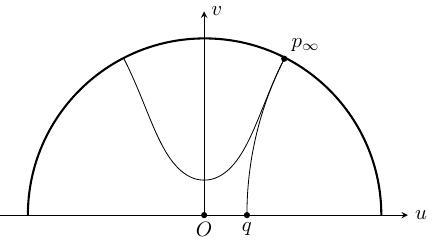}
  \end{center}
  \caption{The distance $x(\infty)$ defined in
  \eqref{eq:Gomes distance} is equal to $\dist(O,q)$, where
  the point $q$ is the intersection of the $u$-axis and the
  unique geodesic which is perpendicular to both the $u$-axis
  at $q$ and $\partial_{\infty}\B_{+}^2$ at $p_\infty$ (here
  $p_\infty$ is one of the asymptotic boundary points of
  $\sigma$ given by \eqref{eq:parametric equation of catenary}).
  In this figure, $y_{0}=0.4$, and so
  $x(\infty)\approx{}0.492681$.}
  \label{fig:Gomes distance}
\end{figure}

Now replace $y_0$ by a parameter $a\in[0,\infty)$ in
\eqref{eq: catenary equation}, and set
\begin{equation}\tag{\ref{eq: catenary equation}$'$}
                \label{eq:catenary(a,t)}
  \rho(a,t)=\int_{a}^{t}\frac{\sinh(2a)}{\cosh\tau}
  \frac{d\tau}{\sqrt{\sinh^2(2\tau)-\sinh^2(2a)}}\ ,
  \quad{}t\geq{}a\ .
\end{equation}
Let $\sigma_a$ be the catenary whose parametric equation
is given by
\begin{equation}\label{eq:parametric equation of catenary}
   t\mapsto(\pm\rho(a,t),t)\in\B_{+}^2\ ,
   \quad\text{for}\ t\geq{}a\ .
\end{equation}
Let $\CC_a$ be the minimal surface
of revolution along the axis $\gamma_{0}$ whose generating
curve is the catenary $\sigma_a$.

\subsection{Existence and uniqueness of spherical catenoids}
Obviously the asymptotic boundary of any spherical catenoid
$\CC_a$ is the union of two circles (see also
\cite[Proposition 3.1]{Gom87}). It's important for us
to determine whether there exists a minimal spherical
catenoid asymptotic to any given pair of disjoint circles on
$S_{\infty}^2$, since in \cite{Wang11a} we
construct quasi-Fuchsian $3$-manifolds
which contain arbitrarily many incompressible minimal surface
by using the (least area) minimal spherical catenoids as the
barrier surfaces.

If $C_{1}$ and $C_{2}$ are two disjoint circles on
$S_{\infty}^{2}$, then they are always coaxial. In fact,
let $P_{1}$ and $P_{2}$ be the geodesic planes asymptotic
to $C_{1}$ and $C_{2}$ respectively, there always exists
a unique geodesic $\gamma$ such that $\gamma$ is perpendicular
to both $P_{1}$ and $P_{2}$. Therefore $C_{1}$ and $C_{2}$ are
coaxial with respect to $\gamma$.
We may define the distance between $C_{1}$ and $C_{2}$ by
\begin{equation}
   d_{L}(C_1,C_2)=\dist(P_1,P_2)\ .
\end{equation}

\begin{theorem}\label{thm:Gomes1987-prop3.2-a}
There exists a constant $D_{c}\approx{}1.00228589640$
such that for two disjoint circles
$C_{1},C_{2}\subset{}S_{\infty}^{2}$,
if $d_{L}(C_{1},C_{2})\leq{}D_{c}$, then
there exist a spherical minimal catenoid $\CC$ which is
asymptotic to $C_{1}\cup{}C_{2}$.
\end{theorem}

\begin{remark}In \cite[p. 402]{dOS98}, de Oliveria and Soret
show that for any two congruent circles (in the asymptotic boundary
of the upper half space model of the hyperbolic space $\H^3$)
of Euclidean diameter $d$ and disjoint from each other by the
Euclidean distance $D$, there exists \emph{two} catenoids bounding the
two circles if and only if $D/d\leq\delta$ for some $\delta>0$.
Direct computation shows that
$\delta=\cosh(d_{0}(a_c))-1\approx{}0.127626$, where $d_0(a)$ is
the function defined by \eqref{eq:Gomes function I} and $a_c$ is the
unique critical number of the function $d_0(a)$.
\end{remark}

\begin{figure}[htbp]
  \begin{center}
     \includegraphics[scale=1]{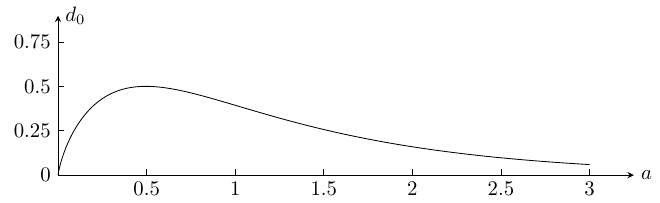}
  \end{center}
  \caption{The graph of the function $d_{0}(a)$ defined by
  \eqref{eq:Gomes function I} for
  $a\in[0,3]$. It seems that $d_{0}(a)$ only has a unique critical number.}
  \label{fig:Gomes function}
\end{figure}

\begin{proof}[\bf{Proof of Theorem~\ref{thm:Gomes1987-prop3.2-a}}]
At first, we define the following
definite integral (depending on the parameter $a$)
\begin{equation}\label{eq:Gomes function I}
   d_{0}(a)=\int_{a}^{\infty}
             \frac{\sinh(2a)}{\cosh{}t}
             \frac{dt}{\sqrt{\sinh^2(2t)-\sinh^2(2a)}}\ .
\end{equation}

We claim that $d_{0}(0)=0$, and as $a$ increases $d_{0}(a)$
increases monotonically, reaches a maximum, then decreases
asymptotically to zero as $a$ goes to infinity
(see also \cite[Proposition 3.2]{Gom87} and
Fig.~\ref{fig:Gomes function}).

It's easy to show $d_{0}(a)\to{}0$ as $a\to\infty$.
In fact, using the substitution $t\to{}t+a$, we have
\begin{align*}
   d_{0}(a)
      &=\int_{0}^{\infty}\frac{\sinh(2a)}{\cosh(a+t)}
        \frac{dt}{\sqrt{\sinh^2(2a+2t)-\sinh^2(2a)}}\\
      &=\int_{0}^{\infty}\frac{1}{\cosh(t+a)}
        \frac{dt}{\sqrt{\left(\dfrac{\sinh(2a+2t)}
        {\sinh(2a)}\right)^2-1}}\\
      &<\int_{0}^{\infty}\frac{1}{\cosh{}a}
        \frac{dt}{\sqrt{(\sinh(2t)+\cosh(2t))^2-1}}\ .
\end{align*}
Since $\sinh(2t)+\cosh(2t)=e^{2t}$, we have
\begin{equation}\label{eq:estimate on d0}
\begin{aligned}
   d_{0}(a)
       &<\frac{1}{\cosh{}a}\int_{0}^{\infty}
         \frac{dt}{\sqrt{e^{4t}-1}}
        =\frac{1}{\cosh{}a}\int_{0}^{\infty}
         \frac{e^{-2t}}{\sqrt{1-e^{-4t}}}\,dt\\
       &=\frac{1}{\cosh{}a}\cdot\frac{\pi}{4}
         \longrightarrow{}0\quad\text{as}\ a\to\infty\ .
\end{aligned}
\end{equation}
Besides, since $\displaystyle\lim_{a\to{}0^{+}}d_0(a)=0$,
$d_0(a)>0$ for $a\in(0,\infty)$
and $d_{0}(a)\to{}0$ as $a\to\infty$,
it must have at least one  maximum value in $(0,\infty)$.

By the argument in the proof of Theorem~\ref{thm:BSE theorem}
in $\S$~\ref{sec:proof of BSE theorem},
we know that $d_{0}'(a)$ has a unique zero $a_c$ such that
$d_{0}'(a)>0$ if $0<a<a_c$ and $d_{0}'(a)<0$ if $a>a_c$, hence
the proof of the claim is complete.

According to the numerical computation: the function
$d_{0}(a)$ achieves its (unique) maximum value
$\approx{}0.5011429482$ when $a=a_c\approx{}0.49577389$,
and so $D_{c}=2d_{0}(a_c)\approx{}1.0022858964$.
\end{proof}

Theorem \ref{thm:Gomes1987-prop3.2-a} shows the existence
of spherical minimal catenoids.
On the other hand, we also have the uniqueness of catenoids
in the sense of following theorem proved by Levitt and
Rosenberg (see \cite[Theorem 3.2]{LR85} and
\cite[Theorem 3]{dCGT86}).
Recall that a complete minimal surface $\Sigma$ of $\H^3$ is
\emph{regular at infinity} if $\partial_\infty\Sigma$
is a $C^2$-submanifold of $S^2_{\infty}$ and
$\overline\Sigma=\Sigma\cup\partial_\infty\Sigma$ is a
$C^2$-surface (with boundary) of $\overline{\H^3}$.

\begin{theorem}[Levitt and Rosenberg]
Let $C_1$ and $C_2$ be two disjoint round circles on
$S_{\infty}^{2}$ and let $\CC$ be a connected minimal surface
immersed in $\H^3$ with $\partial_{\infty}\CC=C_1\cup{}C_2$
and $\CC$ regular at infinity. Then $\CC$ is a
spherical catenoid.
\end{theorem}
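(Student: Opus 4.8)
The plan is to prove the theorem by the method of moving totally geodesic planes, i.e.\ Alexandrov reflection adapted to the ideal boundary of $\H^3$. Since $C_1$ and $C_2$ are disjoint round circles, the discussion preceding the theorem shows that they are coaxial: there is a unique geodesic $\gamma$ orthogonal to both geodesic planes asymptotic to $C_1$ and $C_2$, and each $C_i$ is invariant under the spherical group $G$ fixing $\gamma$ pointwise, hence under the reflection $R_Q$ across every geodesic plane $Q$ containing $\gamma$. The goal is to show that $\Pi$ is $G$-invariant; since $R_{Q_1}\circ R_{Q_2}$ is a rotation about $\gamma$ and such compositions exhaust $G$, it suffices to prove that $\Pi$ is invariant under $R_Q$ for \emph{every} geodesic plane $Q\supset\gamma$. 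Once $G$-invariance is known, $\Pi$ is a surface of revolution about $\gamma$; being minimal, connected, and asymptotic to the two distinct circles $C_1\cup C_2$ (so that its asymptotic boundary has two components, ruling out the totally geodesic plane perpendicular to $\gamma$), its generating curve joins $C_1$ to $C_2$ and solves the catenary equation \eqref{eq: catenary equation}, whence $\Pi$ is a spherical catenoid.

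First I would establish the confinement and boundary regularity needed to run the reflection. Using totally geodesic planes whose asymptotic circles are disjoint from $C_1\cup C_2$ as barriers, the maximum principle confines $\Pi$ to the convex hull of $C_1\cup C_2$ in $\overline{\B^3}$ and shows that $\Pi$ is proper with $\partial_\infty\Pi=C_1\cup C_2$. Combined with boundary regularity for minimal surfaces meeting $S_\infty^2$ transversally, this gives that $\overline{\Pi}$ is a well-behaved surface-with-boundary up to the ideal sphere, so that the strong interior maximum principle and the boundary maximum principle---including its version at the asymptotic boundary---are available in the reflection comparison below.

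Next, fix a geodesic plane $Q\supset\gamma$ and a geodesic $\delta$ meeting $\gamma$ at a point $O$ with $\delta\perp Q$; since $\gamma\subset Q$, we have $\delta\perp\gamma$. Foliate $\H^3$ by the totally geodesic planes $P(t)$ perpendicular to $\delta$ at signed distance $t$ along $\delta$, so that $P(0)=Q$, and let $R_t$ be reflection across $P(t)$. For $t$ large, $\partial_\infty P(t)$ is a small circle about an endpoint of $\delta$ and is disjoint from $C_1\cup C_2$, so $P(t)\cap\overline{\Pi}=\emptyset$. I would then decrease $t$ and, letting $\Pi^{+}(t)$ denote the part of $\Pi$ on the far side of $P(t)$, compare its reflection $R_t(\Pi^{+}(t))$ with $\Pi$: a continuity argument shows this cap remains graphical over the near side until the first value $t_0$ at which it becomes interior-tangent to $\Pi$ or tangent at an ideal point, and the strong maximum principle then forces $\Pi$ to be symmetric across $P(t_0)$, hence $C_1\cup C_2$ symmetric across $\partial_\infty P(t_0)$. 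For $t_0>0$ the plane $P(t_0)$ neither contains $\gamma$ (as $O\notin P(t_0)$) nor is perpendicular to it (as $\delta\perp\gamma$), so it is not one of the symmetry planes of the coaxial pair $C_1\cup C_2$; this contradiction shows the reflection is valid for every $t>0$. Letting $t\to 0^{+}$ shows $R_Q$ carries the part of $\Pi$ on one side of $Q$ into the other side, and the symmetric argument from $t<0$ gives the reverse inclusion, so $R_Q(\Pi)=\Pi$. As $Q\supset\gamma$ was arbitrary, $\Pi$ is $G$-invariant, completing the proof modulo the identification in the first paragraph.

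The main obstacle is the behavior at the ideal boundary: making the maximum principle rigorous at $S_\infty^2$ and ensuring that the moving-plane process can be both started---the reflected caps are initially empty and stay graphical---and closed up at $t=0$. Concretely this rests on the asymptotic boundary regularity of $\Pi$, namely that $\Pi$ approaches $C_1\cup C_2$ in a controlled, essentially $C^1$, manner transverse to $S_\infty^2$, so that tangency of $\Pi$ with its reflection at an ideal point can be detected and fed into a boundary maximum principle. A secondary difficulty is that $\Pi$ is only assumed immersed; this is handled by first upgrading to the properly embedded picture via the convex-hull confinement above, after which the reflection comparison is legitimate.
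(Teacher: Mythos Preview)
The paper does not give its own proof of this theorem; it simply cites the result of Levitt and Rosenberg \cite{LR85} (see also \cite{dCGT86}) and uses it as a black box. There is therefore nothing in the paper to compare your argument against directly.

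That said, your outline follows the Alexandrov moving-planes strategy that Levitt and Rosenberg themselves use, so in spirit you are reconstructing the cited proof rather than offering an alternative. The confinement of $\Pi$ to the convex hull of $C_1\cup C_2$ by sweeping with totally geodesic planes, the foliation by planes $P(t)$ orthogonal to a geodesic $\delta\perp Q$, and the identification of the first touching time $t_0$ with a symmetry plane are all standard steps in that program, and your observation that for $t_0>0$ the plane $P(t_0)$ cannot be a symmetry plane of the coaxial pair $C_1\cup C_2$ is the correct way to force $t_0=0$.

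There is, however, a genuine gap in your last paragraph. You write that the immersed hypothesis ``is handled by first upgrading to the properly embedded picture via the convex-hull confinement above.'' Convex-hull confinement gives properness and containment in a slab, but it does \emph{not} remove self-intersections: an immersed minimal surface can sit entirely inside the convex hull of its asymptotic boundary and still fail to be embedded. The Alexandrov reflection argument, as you present it, compares $\Pi$ with its reflected cap and invokes the strong maximum principle at a first interior tangency; this step requires the two sheets to be locally graphical over a common plane, which is exactly what embeddedness provides and what a mere immersion need not. To close this gap one must either (i) supply a separate argument that a connected minimal surface in $\H^3$ asymptotic to two disjoint round circles is automatically embedded, or (ii) run the reflection at the level of the asymptotic boundary and use a flux or tangency-at-infinity argument that does not presuppose embeddedness, as in the cited references. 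Without one of these, your ``upgrading'' sentence is an assertion, not an argument.
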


\section{Stability of minimal catenoids} 
\label{sec:proof of BSE theorem}

In this section we will prove Theorem \ref{thm:BSE theorem}.
Let $\Sigma$ be a complete minimal surface immersed in a
complete Riemannian $3$-manifold $M$, and let $\Omega$ be any
subdomain of $\Sigma$.
Recall that a \emph{Jacobi field} on $\Omega \subset \Sigma$
is a $C^\infty$ function $\phi$ such that $\Lcal\phi = 0$
on $\Omega$.

According to Theorem~\ref{thm:FCS80}, in order to show that
a complete minimal surface $\Sigma\subset{}M$ is stable,
we just need to find a positive Jacobi field on $\Sigma$.
On the other hand, if a Jacobi field on $\Sigma$ changes its
sign between interior and exterior of a compact subdomain
$\Omega$ of $\Sigma$ and vanishes on $\partial\Omega$,
we can conclude that $\Omega$ is a maximally weakly stable
minimal surface, which also implies that $\Sigma$ is unstable.

The geometry of the ambient space provides useful Jacobi fields.
More precisely, we have the following classical results.

\begin{theorem}[{\cite[pp. 149--150]{Xin03}}]
\label{thm:Killing Jacobi field}
Let $\Sigma$ be a complete minimal surface immersed in
complete Riemannian $3$-manifold $M$
and let $V$ be a Killing field on $M$. The function
$\zeta = \inner{V}{N}$, given by the inner product in
$M$ of the Killing field $V$ with the unit normal $N$ to
the immersion, is a Jacobi field on $\Sigma$.
\end{theorem}

\begin{theorem}[{\cite[Theorem 2.7]{BdC80b}}]
\label{thm:BdC}
Let $X(a,\cdot):\Sigma\to{}M$ be a $1$-parameter family of
minimal immersions. Then, for each fixed $a_0$, the function
\begin{equation*}
  \xi=\biginner{\ppl{X}{a}(a_0,\cdot)}{N}
\end{equation*}
is a Jacobi field on $X(a_0,\Sigma)$, where
$\inner{\cdot}{\cdot}$ is the inner product in $M$ and
$N$ is the unit normal vector field on the minimal surface
$X(a_0,\Sigma)$.
\end{theorem}

\subsection{Jacobi fields on spherical catenoids}
Next we will follow B{\'e}rard and Sa Earp
\cite{BSE10} to introduce the vertical Jacobi fields and
the variation Jacobi fields on the minimal spherical catenoids
$\{\CC_a\}_{a>0}$ in $\B^3$, which
will be used to prove Theorem~\ref{thm:BSE theorem}.

Recall that the semi disk $\B_{+}^2$ is equipped with the metric
\eqref{eq:warped product metric}, it's easy to get
the arc length of the catenary $\sigma_a$:
\begin{equation*}
  s(a,t)
   =\int_{a}^{t}\frac{\sinh(2\tau)}
   {\sqrt{\cosh^2(2\tau)-\cosh^2(2a)}}\,d\tau
   =\frac{1}{2}\,\cosh^{-1}\left(\frac{\cosh(2t)}{\cosh(2a)}\right)\ ,
   \quad t\geq{}a\ .
\end{equation*}
For any $s\in(-\infty,\infty)$, let
\begin{align}\label{eq:x(a,s)}
  x(a,s)&=\sqrt{2}\,\sinh(2a)
          \int_{0}^{s}\frac{\sqrt{\cosh(2a)\cosh(2t)-1}}
          {\cosh^{2}(2a)\cosh^{2}(2t)-1}\,dt\ ,\\
          \label{eq:y(a,s)}
  y(a,s)&=a+\int_{0}^{s}\frac{\cosh(2a)\sinh(2t)}
           {\sqrt{\cosh^{2}(2a)\cosh^{2}(2t)-1}}\,dt\\
        &=\frac{1}{2}\,\cosh^{-1}(\cosh^{2}(2a)\cosh^{2}(2s))\ .
\end{align}
It's easy to verify that
\begin{equation}
   x(a,s)=\rho(a,y(a,s))
\end{equation}
for $s\geq{}0$ and that the function
\begin{equation}
   s\mapsto(x(a,s),y(a,s))
\end{equation}
is arc-length parametrization of the catenary $\sigma_a$
for $s\in(-\infty,\infty)$, where $\rho(\cdot,\cdot)$ is
given by \eqref{eq:catenary(a,t)}
(see \cite[Proposition 4.2]{BSE10}).

Just as in \eqref{eq:(u,v) in terms of (x,y)}, we define
\begin{equation*}
   u(a,s)=\frac{\sinh{}x\cosh{}y}{1+\cosh{}x\cosh{}y}
         \quad\text{and}\quad
   v(a,s)=\frac{\sinh{}y}{1+\cosh{}x\cosh{}y}\ ,
\end{equation*}
where $x=x(a,s)$ and $y=y(a,s)$.
The parametric equation of the catenoid
$\CC_a$ in $\B^3$ is given by
\begin{equation}\label{eq:position vector of catenary}
  Y(a,s,\theta)=
     \begin{pmatrix}
        u\\
        v\omega_\theta
     \end{pmatrix}\ ,
     \quad
     s\in\R\ \text{and}\
     \theta\in[0,2\pi]\ ,
\end{equation}
where $\omega_\theta=\begin{pmatrix}
        \cos\theta\\
        \sin\theta
     \end{pmatrix}$.
Direct computation shows that the unit normal vector
of the catenoid $\CC_a$ at $Y(a,s,\theta)$ is
\begin{equation}
  N(a,s,\theta)=
     \begin{pmatrix}
        v_{s}\\
        -u_{s}\omega_\theta
     \end{pmatrix}\ ,
     \quad
     s\in\R\ \text{and}\
     \theta\in[0,2\pi]\ ,
\end{equation}
where $u_s$ and $v_s$ are the partial derivatives
of $u$ and $v$ on $s$ respectively.

\begin{definition}Let $V$ be the Killing vector field associated
with the hyperbolic translations along the geodesic
$t\mapsto(\tanh(t/2),0,0)\in\B^3$.
The \emph{vertical Jacobi field} on the
catenoid $\CC_a$ is the function
\begin{equation}\label{eq:vertical Jacobi field}
  \zeta(a,s)=\inner{V(a,s,\theta)}{N(a,s,\theta)}\ ,
\end{equation}
where $V(a,s,\theta)$ is the restriction of the Killing vector field
$V$ to the minimal catenoid $\CC_a$ defined by
\eqref{eq:position vector of catenary}.

The \emph{variation Jacobi field} on the
catenoid $\CC_a$ is
\begin{equation}\label{eq:variation Jacobi field}
  \xi(a,s)=-\inner{Y_a(a,s,\theta)}{N(a,s,\theta)}\ ,
\end{equation}
where $Y_a=\ppl{Y}{a}$.
\end{definition}

In order to find the detail expressions of the vertical
and the variation Jacobi fields on the catenoids, we need
some notations (see \cite[$\S$4.2]{BSE10}). Let
\begin{equation}\label{eq:def of f(a,s)}
  f(a,s)=\frac{\sinh^{2}(2a)\cosh(2s)}{\cosh^{2}(2a)\cosh^{2}(2s)-1}\ ,
\end{equation}
and let
\begin{equation}\label{eq:def of I(a,t)}
  I(a,t)=\frac{n(\cosh(2a),\cosh(2t))}
  {d(\cosh(2a),\cosh(2t))}
\end{equation}
where
\begin{itemize}
  \item $n(A,T)=A(3-A^2)T^2+(A^2-1)T-2A$, and
  \item $d(A,T)=(AT+1)^2(AT-1)^{3/2}$.
\end{itemize}
For the functions $x(a,s)$ and $y(a,s)$ given by \eqref{eq:x(a,s)}
and \eqref{eq:y(a,s)}, the notations
$x_a$, $x_s$, $y_a$ and $y_s$ denote the partial derivatives of
$x(a,s)$ and $y(a,s)$ on $a$ and $s$
respectively.

\begin{proposition}[{\cite[$\S$4.2.1]{BSE10}}]
The vertical Jacobi field $\zeta(a,s)$ is given by
\begin{equation*}
  \zeta(a,s)
     = \sqrt{2}\,\cosh(y(a,s))y_{s}(a,s)
     = \frac{\cosh(2a)\sinh(2s)}{\sqrt{\cosh(2a)\cosh(2s)-1}}\ .
\end{equation*}
The variation Jacobi field $\xi(a,s)$ is given by
\begin{align*}
  \xi(a,s)
     &=-\cosh(y(a,s))(x_a(a,s)y_s(a,s)-x_s(a,s)y_a(a,s)) \\
     &=f(a,s)-\zeta(a,s)\int_{0}^{s}I(a,t)dt\ ,
\end{align*}
where $f(a,s)$ and $I(a,t)$ are given by
\eqref{eq:def of f(a,s)} and \eqref{eq:def of I(a,t)}
respectively.
\end{proposition}


Since $x(a,\infty)$ is well defined for any $a>0$, we may set
\begin{equation}
  E(a)=\ddl{}{a}\,x(a,\infty)
      =\sqrt{2}\int_{0}^{\infty}I(a,t)dt\ .
\end{equation}
Equivalently we have the following identity
(see \cite[p. 3665]{BSE10}):
\begin{equation}\label{eq:E=d0'}
  E(a)=\frac{d_{0}'(a)}{\sqrt{2}}\ ,
\end{equation}
where $d_{0}'(a)$ is derivative of the function $d_{0}(a)$
given by \eqref{eq:Gomes function I}.

For any (connected) interval $\mathbf{I}\subset\R$, we define
\begin{equation}
  \CC(a,\mathbf{I})=\{Y(a,s,\theta)\in\B^3\ |\ s\in{}\mathbf{I}
  \ \text{and}\ \theta\in[0,2\pi]\}\ ,
\end{equation}
where $Y(a,s,\theta)$ is given by \eqref{eq:position vector of catenary}.

\begin{lemma}[{\cite[Lemma 4.5]{BSE10}}]
\label{lem:BSE-stability-half-catenoid}
For any constant $a>0$, the half catenoids $\CC(a,(-\infty,0])$
and $\CC(a,[0,\infty))$ are both stable.

Any Jacobi field $\eta(a, s)$ depending only on the radial
variable s on $\CC_a$ can change its sign at most
once on either $(-\infty,0]$ or $[0,\infty)$.
\end{lemma}

\begin{proof}The first part follows from the fact that $\zeta(a,s)$
doesn't change its sign on either $\CC(a,(-\infty,0])$ or
$\CC(a,[0,\infty))$ and $\Lcal{}\zeta=0$.

Assume that some Jacobi field $\eta(a,s)$ on $\CC_a$ changes its
sign more than once on $[0,\infty)$, then $\eta(a,s)$ has more than
two zeros on $[0,\infty)$, say
$0<{}z_1<z_2<\cdots$. Let $\mathbf{I}=[z_1,z_2]$ and let
$\phi(a,s)$ be the restriction of $\eta(a,s)$ to
$\CC(a,\mathbf{I})$, then we have
$\phi\in{}C_{0}^\infty(\CC(a,\mathbf{I}))$ and $\Lcal{}\phi=0$,
which imply that $\lambda_{1}(\CC(a,\mathbf{I}))\leq{}0$.
This is a contradiction, since $\CC(a,\mathbf{I})$ is a compact
connected subdomain of $\CC(a,[0,\infty))$, which must be stable.
\end{proof}

The following theorem, whose proof can be found
in \cite[p. 3663]{BSE10}, is crucial to the proof of
Theorem~\ref{thm:BSE theorem}. For convenience of the reader, we
rephrase the original proof here. Because of \eqref{eq:E=d0'},
\eqref{eq:derivative of d0} and
Lemma~\ref{lem:derivative of d0}, we always have
$\cosh^{2}(2a)<3$ if $E(a)=0$, hence we also
simplify the proof of \cite[Theorem 4.7 (1)]{BSE10}.

\begin{theorem}[{\cite[Theorem 4.7 (1)]{BSE10}}]
\label{thm:BSE-stability}
Let $\sigma_a$ be the catenary given by
\eqref{eq: catenary equation} and let $\CC_a$ be the
minimal surface of revolution along the $u$-axis
whose generating curve is the catenary $\sigma_a$.
\begin{enumerate}
  \item If $E(a)\leq{}0$, then $\CC_a$ is stable.
  \item If $E(a)>0$, then $\CC_a$ is unstable
        and has index $1$.
\end{enumerate}
\end{theorem}

\begin{proof}(1) As state in Lemma \ref{lem:BSE-stability-half-catenoid},
the function $\xi(a, s)$ can change its sign at most
once on $(0,\infty)$ and $(-\infty,0)$ respectively. Observe
that the function $\xi(a, s)$ is even and that $\xi(a,0)=1$. To determine
whether $\xi$ has a zero, it suffices to look at its behaviour at infinity.

If $E(a)<0$, then $\int_{0}^{\infty}I(a,t)dt<0$, which implies that
$\xi(a,s)\to\infty$ as $s\to\pm\infty$, therefore $\xi(a,s)>0$
for all $s\in(-\infty,\infty)$.

If $E(a)=0$, we have the following equation
\begin{equation*}
  \xi(a,s)=f(a,s)+\zeta(a,s)\int_{s}^{\infty}I(a,t)dt\ .
\end{equation*}
By \eqref{eq:E=d0'}, \eqref{eq:derivative of d0} and
Lemma~\ref{lem:derivative of d0}, we can see that if $E(a)=0$, then
\begin{equation*}
   \cosh^{2}(2a)<\left(\frac{1+\sqrt{5}}{2}\right)^2<3\ ,
\end{equation*}
and so $I(a,t)>0$ if $t$ is sufficient large.
As $s$ is sufficiently large, $\xi(a,s)>0$,
thus $\xi(a,s)>0$ for all $s\in(-\infty,\infty)$. Therefore
$\CC_{a}$ is stable if $E(a)\leq{}0$.

(2) Recall that the variation Jacobi field  $\xi(a,s)$ can
change its sign at most once on either $(0,\infty)$ or $(-\infty,0)$
by Lemma~\ref{lem:BSE-stability-half-catenoid}.
Now suppose that $E(a)>0$, since $\xi(a,0)=1$ and $\xi(a,s)\to-\infty$
as $s\to\pm\infty$, we know that
$\xi(a,s)$ has exactly two symmetric zeros in $(-\infty,\infty)$,
which are denoted by $\pm{}z(a)$.
Let $\CC(z(a))$ be the subdomain of $\CC_a$ defined by
\begin{equation}\label{eq:portion of Pi}
  \CC(z(a))=\CC(a,[-z(a),z(a)])\ .
\end{equation}
Let $\phi(a,s)$ be restriction of $\xi(a,s)$ to $\CC(z(a))$, then
$\phi\in{}C_{0}^\infty(\CC(z(a)))$ and $\Lcal{}\phi=0$. This implies
that $\lambda_{1}(\CC(z(a)))\leq{}0$, which can imply that any compact
connected subdomain of $\CC_a$ containing $\CC(z(a))$ must be
unstable by Lemma~\ref{lem:monotonicity of eigenvalue}. Therefore
$\CC_a$ has index at least one.
By \cite[Theorem 4.3]{Seo11} or \cite[$\S$~3.3]{Tuz93},
$\CC_a$ has index one.
\end{proof}

\subsection{The final step to prove Theorem~\ref{thm:BSE theorem}}
In order to prove Theorem~\ref{thm:BSE theorem},
we still need two lemmas.

\begin{lemma}\label{lem:derivative of d0}
Let $\phi(a,t)=\sqrt{5}\,\cosh(a+t)-\cosh(3a+t)$,
then $\phi(a,t)\leq{}0$ for
$(a,t)\in{}[A_3,\infty)\times[0,\infty)$,
where the constant $A_{3}$ is defined by
\begin{equation}\label{eq:constant Lambda-3}
   A_3=\cosh^{-1}\left(\frac{\sqrt{3+\sqrt{5}}}{2}\right)
            \approx 0.530638\ .
\end{equation}
\end{lemma}

\begin{proof}It's easy to verify that $\phi(a,t)\leq{}0$ is
equivalent to
\begin{equation*}
   \cosh(3a)-\sqrt{5}\cosh{}a+\tanh{}t\cdot
   (\sinh(3a)-\sqrt{5}\sinh{}a)\geq{}0\ .
\end{equation*}
Since $\tanh{}t\geq{}0$ for $t\geq{}0$ and
$\sinh(3a)-\sqrt{5}\sinh{}a\geq{}0$ for $a\geq{}0$,
we need solve the inequality
$\cosh(3a)-\sqrt{5}\cosh{}a\geq{}0$.

Let $A_3$ be the solution of the equation
$0=\sqrt{5}\,\cosh{}a-\cosh(3a)=
(\sqrt{5}-(4\cosh^{2}a-3))\cosh{}a$,
then $\phi(a,t)\leq{}0$
if $a\geq{}A_3$ and $t\geq{}0$.
\end{proof}

\begin{lemma}\label{lem:second derivative of d[lambda]}
Let $\psi(a,t)$ be the function given by
\begin{equation}\label{eq:psi(t,l)}
\begin{aligned}
   \psi(a,t)=
        &\,76\sinh(2a)-22\sinh(2t)+29\sinh(4a+2t)\\
        &\,+\sinh(8a+2t)-26\sinh(6a+4t)-6\sinh(10a+4t)\\
        &\,-25\sinh(8a+6t)+\sinh(12a+6t)\ .
\end{aligned}
\end{equation}
Then $\psi(a,t)<0$ for all $(a,t)\in{}[0,A_4]\times[0,\infty)$,
where the constant
\begin{equation}\label{eq:constant Lambda-4}
   A_4=\frac{1}{4}\cosh^{-1}\left(\frac{35+\sqrt{1241}}{8}\right)
            \approx 0.715548
\end{equation}
is the solution of the equation
$4\cosh^2(4a)-35\cosh(4a)-1=0$.
\end{lemma}

\begin{proof}Expand each term in $\psi(a,t)$ with the form
$\sinh(ma+nt)$ , then we may write
$\psi(a,t)=\psi_1(a,t)+\psi_2(a,t)$, where
\begin{equation*}
\begin{aligned}
   \psi_1(a,t)=
        &\,-22\sinh(2t)+29\sinh(2t)\cosh(4a)+\sinh(2t)\cosh(8a)\\
        &\,-26\sinh(4t)\cosh(6a)-6\sinh(4t)\cosh(10a)\\
        &\,-25\sinh(6t)\cosh(8a)+\sinh(6t)\cosh(12a))\ ,
\end{aligned}
\end{equation*}
and
\begin{equation*}
\begin{aligned}
   \psi_2(a,t)=
        &\,76\sinh(2a)+29\cosh(2t)\sinh(4a)+\cosh(2t)\sinh(8a)\\
        &\,-26\cosh(4t)\sinh(6a)-6\cosh(4t)\sinh(10a)\\
        &\,-25\cosh(6t)\sinh(8a)+\cosh(6t)\sinh(12a)\ .
\end{aligned}
\end{equation*}

\noindent\textbf{Claim}: \emph{$\psi_1(a,t)\leq{}0$ and
$\psi_2(a,t)\leq{}0$ for $(a,t)\in[0,A_4]\times[0,\infty)$}.

\begin{proof}[\bf{Proof of Claim}]
First of all, we will show that $\psi_1(a,\cdot)\leq{}0$
for $a\in[0,A_4]$. Since $\cosh(2t)\geq{}1$ for
any $t\in[0,\infty)$, we have the estimate
\begin{align*}
   \psi_1(a,t)
       =&\,-\sinh(2t)(22-29\cosh(4a)-\cosh(8a)\\
        &\qquad\qquad +52\cosh(2t)\cosh(6a)
                      +12\cosh(2t)\cosh(10a))\\
        &\,-\sinh(6t)(25\cosh(8a)-\cosh(12a))\\
    \leq&\,-\sinh(2t)(22-29\cosh(4a)-\cosh(8a)\\
        &\qquad\qquad +52\cosh(6a)+12\cosh(10a))\\
        &\,-\sinh(6t)(25\cosh(8a)-\cosh(12a))\ .
\end{align*}
Since $52\cosh(6a)-29\cosh(4a)>0$ and
$12\cosh(10a)-\cosh(8a)>0$ for $0\leq{}a<\infty$ and
$25\cosh(8a)-\cosh(12a)>0$ for $0\leq{}a\leq{}A_4$,
we have $\psi_1(a,\cdot)<0$ for $0\leq{}a\leq{}A_4$.

Secondly, for any $a\geq{}0$, we apply the inequality
\begin{equation*}
   \sinh((m+n)a)\geq\sinh(ma)+\sinh(na)\ ,
\end{equation*}
where $m,n$ are positive integers, to get the following
inequalities
\begin{itemize}
   \item $\sinh(6a)\geq\sinh(4a)+\sinh(2a)$,
   \item $\sinh(8a)\geq{}4\sinh(2a)$, and
   \item $\displaystyle\sinh(10a)\geq
          \begin{cases}
             \sinh(8a)+\sinh(2a)\\
             \sinh(4a)+3\sinh(2a)\\
             5\sinh(2a)
          \end{cases}$\ ,
\end{itemize}
which can imply the estimate
\begin{align*}
   \psi_2(a,t)\leq
        &\,-46\sinh(2a)(\cosh(4t)-1)-30\sinh(2a)(\cosh(6t)-1)\\
        &\,-29\sinh(4a)(\cosh(4t)-\cosh(2t))\\
        &\,-\sinh(8a)(\cosh(4t)-\cosh(2t))\\
        &\,-\cosh(6t)\left(\frac{35}{2}
            \sinh(8a)-\sinh(12a)\right)\ .
\end{align*}
Since $\frac{35}{2}\sinh(8a)-\sinh(12a)=
\sinh(4a)(1+35\cosh(4a)-4\cosh^2(4a))\geq{}0$ if
$0\leq{}a\leq{}A_4$ and the fact
$\cosh(6t)\geq\cosh(4t)\geq\cosh(2t)\geq{}1$ for $0\leq{}t<\infty$,
we have $\psi_2(a,t)\leq{}0$ for
$(a,t)\in[0,A_4]\times[0,\infty)$.
\end{proof}

Therefore $\psi(a,t)<0$ for $(a,t)\in[0,A_4]\times[0,\infty)$.
\end{proof}

Now we are able to prove Theorem~\ref{thm:BSE theorem}.

\begin{theorem}[\cite{BSE10}]\label{thm:BSE theorem}
There exists a constant $a_c\approx{}0.49577389$ such that
the following statements are true:
\begin{enumerate}
     \item $\CC_a$ is an unstable minimal surface with index
           one if $0<a<a_c$;
     \item $\CC_a$ is a globally stable minimal surface if
           $a\geq{}a_c$.
  \end{enumerate}
\end{theorem}

\begin{proof}[\bf{Proof of Theorem~\ref{thm:BSE theorem}}]
Recall that we have $E(a)=d_{0}'(a)/\sqrt{2}$ by
\eqref{eq:E=d0'}. We claim that $d_{0}'(a)$
satisfies the following conditions:
\begin{itemize}
   \item $d_{0}'(a)\to\infty$ as $a\to{}0^{+}$ and
         $d_{0}'(a)<0$ on $[A_3,\infty)$, and
   \item $d_{0}'(a)$ is decreasing on $(0,A_4)$,
\end{itemize}
where $A_3<A_4$ are constants defined in
Lemma \ref{lem:derivative of d0} and
Lemma \ref{lem:second derivative of d[lambda]}.
These conditions can imply that $d_{0}'$ has a unique zero
$a_c\in(0,\infty)$ such that $d_{0}'(a)>0$ if $0<a<a_c$ and
$d_{0}'(a)<0$ if $a>a_c$, hence together with
Theorem~\ref{thm:BSE-stability}, the theorem follows
(see Fig.~\ref{fig:derivative of Gomes function}).

\begin{figure}[htbp]
  \begin{center}
     \includegraphics[scale=0.85]{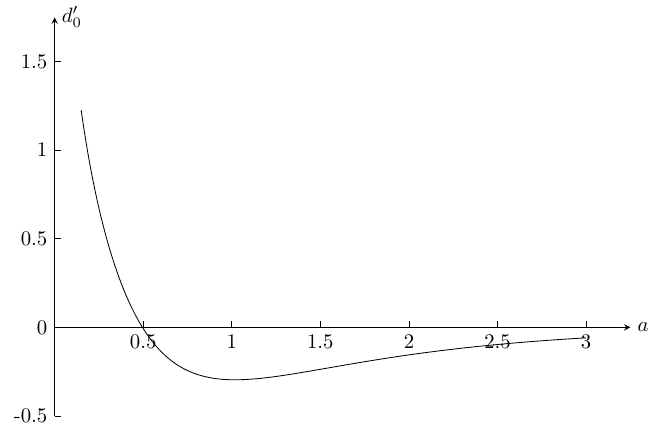}
  \end{center}
  \caption{The derivative of the function $d_{0}(a)$ for
  $a\in(0,3]$.}
  \label{fig:derivative of Gomes function}
\end{figure}

Next let's prove the above Claim. It's easy to verify that
\begin{equation}\label{eq:derivative of d0}
   d_{0}'(a)=\int_{0}^{\infty}
      \frac{\sinh(a+t)
      (5\cosh^2(a+t)-\cosh^{2}(3a+t))}
      {\cosh^2(a+t)\sqrt{\sinh(2t)\sinh^{3}(4a+2t)}}\,dt\ .
\end{equation}
By Lemma \ref{lem:derivative of d0}, $d_0'(a)<0$ on
$(A_3,\infty)$. Now let
\begin{equation}
   h(a,t)=\frac{\sinh(a+t)
      (5\cosh^2(a+t)-\cosh^{2}(3a+t))}
      {\cosh^2(a+t)\sqrt{\sinh(2t)\sinh^{3}(4a+2t)}}\ .
\end{equation}
Then for any fixed constant $a>0$, we have the estimates
\begin{equation}
   h(a,t)\sim{}C_{1}(a)\left(
   \sqrt{\sinh{}t}+\frac{\cosh{}t}{\sqrt{\sinh{}t}}\right)\ ,
   \quad\text{as}\ t\to{}0\ ,
\end{equation}
and
\begin{equation}
   h(a,t)\sim
   \frac{C_{2}(a)}{\cosh{}t\cosh(2a+t)\sinh(4a+2t)}\ ,
   \quad\text{as}\ t\to\infty\ .
\end{equation}
Hence $d_{0}'(a)$ is well defined for $a>0$, and then
\begin{align*}
   \lim_{a\to{}0^{+}}d_{0}'(a)
    &=\int_{0}^{\infty}\frac{1}{\sinh{}t\cosh^{2}t}\,dt\\
    &=\left[\log\left(\frac{\cosh{}t-1}{\cosh{}t+1}\right)+
      \frac{1}{\cosh{}t}\right]_{t=0}^{t=\infty}=\infty\ .
\end{align*}
Next, we have
\begin{equation}\label{eq:2nd derivative of d0}
   d_{0}''(a)=\int_{0}^{\infty}\frac{\psi(a,t)}
   {16\cosh^3(a+t)\sqrt{\sinh(2t)\sinh^{5}(4a+2t)}}\,dt\ ,
\end{equation}
here $\psi(a,t)$ is the function defined  by \eqref{eq:psi(t,l)}.
By the result in Lemma \ref{lem:second derivative of d[lambda]},
$d_{0}''(a)<0$ for $a\in(0,A_4)$, thus
$d_{0}'(a)$ is decreasing on $(0,A_4)$.
\end{proof}

\section{Least area minimal catenoids}
\label{sec:proof of main theorem}

In this section, we will prove Theorem \ref{thm:main theorem}.
First of all we need some results for proving
Theorem~\ref{thm:main theorem}.

\begin{proposition}[{\cite[Proposition 4.8 and Lemma 4.9]{BSE10}}]
\label{thm:Berard-Sa Earp}
Let $\sigma_a\subset\B_{+}^2$ be the catenary given by
\eqref{eq: catenary equation}. For $0<a_1<a_2$,
the catenaries $\sigma_{a_1}$ and $\sigma_{a_2}$
intersect at most at two symmetric points and they do so if and
only if $d_{0}(a_1)<d_{0}(a_2)$. Furthermore we have the following
results:
\begin{enumerate}
   \item For $a_1,a_2\in(0,a_c)$, the catenaries
         $\sigma_{a_1}$ and $\sigma_{a_2}$ intersect exactly
         at two symmetric points {\rm(}see
         Fig.~\ref{fig:unstable catenaries}{\rm)}.
   \item All catenaries $\{\sigma_a\}_{a\geq{}a_c}$ foliate
         the subdomain of $\B^2_{+}$ which is bounded by the
         catenary $\sigma_{a_c}$ and the arc of
         $\partial_{\infty}\B^2_{+}$ between
         the asymptotic boundary points of $\sigma_{a_c}$
         {\rm(}see Fig.~\ref{fig:stable catenaries}{\rm)}.
\end{enumerate}
\end{proposition}

\begin{figure}[htbp]
  \begin{center}
     \includegraphics[scale=0.9]{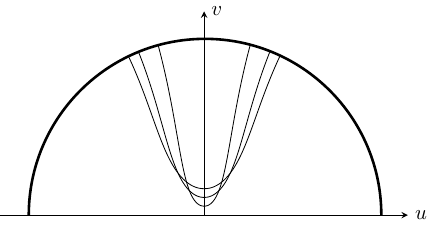}
  \end{center}
  \caption{Catenaries $\sigma_a$ for $a=0.1$, $0.2$ and $0.3$.
  The catenaries in the family $\{\CC_a\}_{0<a<a_c}$ intersect with
  each other at exactly two symmetric points.}
  \label{fig:unstable catenaries}
\end{figure}

\begin{figure}[htbp]
  \begin{center}
     \includegraphics[scale=0.9]{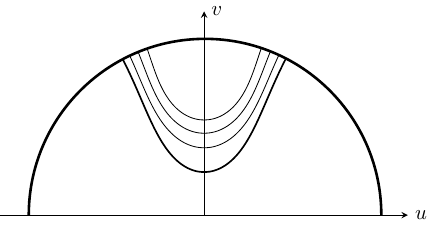}
  \end{center}
  \caption{Catenaries $\sigma_a$ for $a=a_c$, $0.8$,
  $1.0$ and $1.2$. All catenaries in $\{\sigma_a\}_{a\geq{}a_c}$
  foliate the subdomain of the semi-disk $\B^2_{+}$ above the
  catenary $\sigma_{a_c}$.}
  \label{fig:stable catenaries}
\end{figure}

According Proposition~\ref{thm:Berard-Sa Earp}, the catenaries
$\sigma_{a_1}$ and $\sigma_{a_2}$ intersect exactly at two points
if $0<a_1<a_2<a_c$. In order to prove
Theorem~\ref{thm:main theorem}, we require that the intersections
of $\sigma_{a_1}$ and $\sigma_{a_2}$ should not be contained in
the region of $\B^{2}_{+}$ foliated by $\{\sigma_a\}_{a\geq{}a_c}$.
More precisely, we have the following result.

\begin{proposition}
\label{thm:small maximally weakly stable domain}
For any constant $a\in(0,a_c)$,
\begin{equation*}
  \CC(z(a))\cap\Bigg(\bigcup_{\alpha\geq{}a_c}
  \CC_{\alpha}\Bigg)=\emptyset\ ,
\end{equation*}
where $\CC(z(a))$ is the maximally weakly stable subdomian
of $\CC_a$ which is defined by \eqref{eq:portion of Pi}.
\end{proposition}

\begin{lemma}\label{lem:negative second order derivative of d(a,t)}
Let $R_3$ and $R_4$ be the regions defined by
\begin{align*}
  R_3&=\{(a,t)\in\R^2\ |\ t\geq{}a\geq{}A_3\}\ ,\\
  R_4&=\{(a,t)\in\R^2\ |\ 0<a\leq{}A_4\ \text{and}\ t\geq{}a\}\ ,
\end{align*}
where $A_3$ and $A_4$ are the constants defined in
\eqref{eq:constant Lambda-3} and \eqref{eq:constant Lambda-4}.
Then we have
\begin{equation*}
  \ppl{}{a}\,\rho(a,t)<0
  \quad\text{for}\ (a,t)\in{}R_3\ ,
\end{equation*}
and
\begin{equation*}
  \ppz{}{a}\,\rho(a,t)<0
  \quad\text{for}\ (a,t)\in{}R_4\ ,
\end{equation*}
where $\rho(a,t)$ is defined in \eqref{eq:catenary(a,t)}.
\end{lemma}

\begin{proof}Using the substitution $\tau\mapsto\tau+a$, we have
\begin{equation*}
  \rho(a,t)=\int_{0}^{t-a}\frac{\sinh(2a)}{\cosh(a+\tau)}
  \frac{d\tau}{\sqrt{\sinh^2(2a+2\tau)-\sinh^2(2a)}}\ ,
  \quad{}t\geq{}a\ .
\end{equation*}
Direct computation shows
\begin{align*}
  \ppl{}{a}\,\rho(a,t)=
     &\,\int_{0}^{t-a}\frac{\sinh(a+\tau)
      (5\cosh^2(a+\tau)-\cosh^{2}(3a+\tau))}
      {\cosh^2(a+\tau)\sqrt{\sinh(2\tau)\sinh^{3}(4a+2\tau)}}\,
      d\tau\\
     &-\frac{\sinh(2a)}
      {\cosh{}t\cdot\sqrt{\sinh^{2}(2t)-\sinh^{2}(2a)}}
\end{align*}
and
\begin{align*}
   \ppz{}{a}\,\rho(a,t)=
       &\,\int_{0}^{t-a}\frac{\psi(a,\tau)}
          {16\cosh^3(a+\tau)\sqrt{\sinh(2\tau)\sinh^{5}(4a+2\tau)}}
          \,d\tau\\
       &\,-\frac{\sinh{}t\cdot{}w(a,t)}
          {\sqrt{2}\,\cosh^{2}t\cdot(\cosh(4t)-\cosh(4a))^{3/2}}\ ,
\end{align*}
where $\psi(a,\tau)$ is given by \eqref{eq:psi(t,l)} and $w(a,t)$ is defined by
\begin{align*}
  w(a,t)=&\,-5\sinh(2a)+\sinh(6a)-7\sinh(2a-4t)\\
         &\,-12\sinh(2a-2t)+4\sinh(2a+2t)+\sinh(2a+4t)\ .
\end{align*}
Recall that $t\geq{}a>0$, we have $w(a,t)\geq\sinh(6a)>0$, together with
the arguments in the proofs of Lemma \ref{lem:derivative of d0}
and Lemma \ref{lem:second derivative of d[lambda]}, the proof
of the lemma is complete.
\end{proof}

\begin{proof}[\bf{Proof of Proposition~\ref{thm:small maximally weakly stable domain}}]
Let $\mathcal{D}=\cup_{\alpha\geq{}a_c}\sigma_\alpha$ be
the subregion of $\B_{+}^2$.
We claim that $\sigma_{a_1}\cap\sigma_{a_2}$ is disjoint from
$\mathcal{D}$ for $0<a_1<a_2<a_c$
(see Fig.~\ref{fig:catenary_lambda_shadow}).

\begin{figure}[htbp]
  \begin{center}
     \includegraphics[scale=0.9]{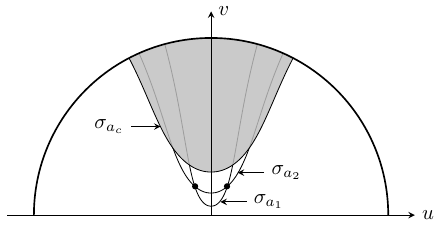}
  \end{center}
  \caption{$\mathcal{D}$ is the shadow region. In this figure,
  $a_1=0.1$ and $a_2=0.25$. We can see that
  $\sigma_{a_1}\cap\sigma_{a_2}$ is disjoint from
  $\mathcal{D}$.}
  \label{fig:catenary_lambda_shadow}
\end{figure}

Actually if $\sigma_{a_1}\cap\sigma_{a_2}\subset{}\mathcal{D}$,
since $\mathcal{D}$ is foliated
by the catenaries $\{\sigma_\alpha\}_{\alpha\geq{}a_c}$, there
exists $a_3>a_c$ such that $\sigma_{a_1}$, $\sigma_{a_2}$
and $\sigma_{a_3}$ intersect at the same points. Let $t_0$ be the
$y$-coordinate of the intersection points (recall that we equip
$\B^{2}_{+}$ with the warped product metric
\eqref{eq:warped product metric}), then $t_0>a_3$ (see the proof
of Lemma 4.9 in \cite{BSE10}). Consider the function
\begin{equation*}
  \varphi(\alpha)=\rho(\alpha,t_0)\ ,
  \quad{}\alpha\in[a_1,a_3]\ .
\end{equation*}
By our assumption, $\varphi(a_1)=\varphi(a_2)=\varphi(a_3)$. By
L'H{\^o}pital's rule,
there exist $a_4\in(a_1,a_2)$ and $a_5\in(a_2,a_3)$ such that
$\varphi'(a_4)=\varphi'(a_5)=0$. By
Lemma~\ref{lem:negative second order derivative of d(a,t)},
$a_5<A_3$ and then $a_5<A_4$. Applying L'H{\^o}pital's rule again,
there exists $a_6\in(a_4,a_5)\subset(a_1,a_3)\cap(0,A_4)$ such that
$\varphi''(a_6)=0$. This is impossible according to
Lemma~\ref{lem:negative second order derivative of d(a,t)}.
Therefore $\sigma_{a_1}\cap\sigma_{a_2}$ is disjoint from
$\mathcal{D}$ for $0<a_1<a_2<a_c$.

For $0<a_1<a_2<a_3<a_c$, let $y_{ij}$ denote the $y$-coordinate of
the intersection of $\sigma_{a_i}$ and $\sigma_{a_j}$
($1\leq{}i<j\leq{}3$), then we claim that $y_{12}<y_{13}<y_{23}$
(see Fig.~\ref{fig:catenary_intersection}). Otherwise, we must have
$y_{23}<y_{13}<y_{12}$, this may imply there exists $a_4\in(a_3,a_c)$
such that $\sigma_{a_1}$, $\sigma_{a_2}$ and $\sigma_{a_4}$ intersect
at the same points. But this is impossible according to the
similar argument as above. Hence
$\lim\limits_{\alpha\to{}a}(\sigma_a\cap\sigma_\alpha)$
exists and is disjoint from $\mathcal{D}$.

By the definition of the variation Jacobi field $\xi$ in
\eqref{eq:variation Jacobi field}, $\xi(a,\cdot)$ changes its sign at the
different sides of $\lim\limits_{\alpha\to{}a}(\CC_a\cap\CC_\alpha)$, hence
\begin{equation*}
   \lim_{\alpha\to{}a}(\CC_a\cap\CC_\alpha)=
   \partial\CC(z(a))\ ,
\end{equation*}
and therefore
$\CC(z(a))\cap(\cup_{\alpha\geq{}a_c}\CC_{\alpha})=\emptyset$
if $a<a_c$.
\end{proof}

\begin{figure}[htbp]
  \begin{center}
     \includegraphics[scale=0.9]{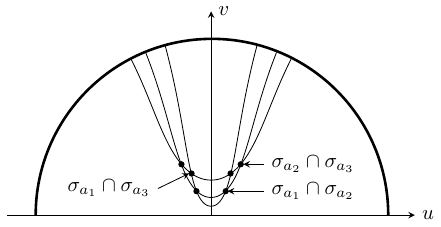}
  \end{center}
  \caption{Recall that each catenary $\sigma_a$ is symmetric about
  the $y$-axis. For $0<a_i<a_j<a_c$, the catenaries $\sigma_{a_i}$
  and $\sigma_{a_j}$ intersect at two points which are also symmetric
  about the $y$-axis. In this figure, $a_1=0.1$, $a_2=0.2$ and
  $a_3=0.4$, and one can see that $y_{12}<y_{13}<y_{23}$.}
  \label{fig:catenary_intersection}
\end{figure}

The following estimate is crucial to the proof of
Theorem \ref{thm:main theorem}.

\begin{lemma}\label{lem:area compare}
For all real numbers $a>0$, consider the functions
\begin{equation}\label{eq: area difference function}
   f(a)=\int_{a}^{\infty}\sinh{}t\cdot
   \left(\frac{\sinh(2t)}{\sqrt{\sinh^2(2t)-\sinh^2(2a)}}
   -1\right)dt\ ,
\end{equation}
and $g(a)=\cosh{}a-1$, then we have the following results:
\begin{enumerate}
   \item $f(a)$ is well defined for each fixed
         $a\in(0,\infty)$.
   \item $f(a)<g(a)$ for sufficiently large $a$.
\end{enumerate}
\end{lemma}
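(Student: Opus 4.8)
The plan is to treat the two assertions separately, dealing with convergence first and then with the asymptotic comparison. The one algebraic device that drives everything is the rationalization
\[
   \frac{\sinh 2t}{\sqrt{\sinh^2 2t-\sinh^2 2\lambda}}-1
   =\frac{\sinh^2 2\lambda}{\sqrt{\sinh^2 2t-\sinh^2 2\lambda}\,
   \bigl(\sinh 2t+\sqrt{\sinh^2 2t-\sinh^2 2\lambda}\bigr)}\ ,
\]
obtained by multiplying by the conjugate. In this form the integrand is manifestly nonnegative, which is convenient throughout.

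For part (1), I would read the convergence off the rationalized expression. Near the lower endpoint $t=\lambda$ the quantity $\sinh^2 2t-\sinh^2 2\lambda$ vanishes to first order, so the integrand blows up only like $(t-\lambda)^{-1/2}$, which is integrable. As $t\to\infty$ the numerator is the constant $\sinh^2 2\lambda$ while the denominator grows like $2\sinh^2 2t$, so after restoring the factor $\sinh t$ the integrand decays like $\sinh^2 2\lambda\cdot e^{-3t}$. These two local estimates give absolute convergence, so $f(\lambda)$ is well defined for every fixed $\lambda>0$.

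For part (2) the idea is to manufacture an explicit elementary majorant for $f$ and compare its growth with $g(\lambda)=\cosh\lambda-1\sim\tfrac12 e^{\lambda}$. Starting from the rationalized form, and since the omitted square-root term in the second factor of the denominator is nonnegative, replacing that factor by $\sinh 2t$ only enlarges the fraction, so
\[
   f(\lambda)\leq\int_{\lambda}^{\infty}
   \frac{\sinh^2 2\lambda}{2\cosh t\,\sqrt{\sinh^2 2t-\sinh^2 2\lambda}}\,dt\ .
\]
The key simplification is then the substitution $r=\sinh t$, with $R=\sinh\lambda$, which turns the stubborn hyperbolic expression into an algebraic one thanks to the factorization
\[
   \sinh^2 2t-\sinh^2 2\lambda=4\,(r^2-R^2)\,(1+r^2+R^2)\ .
\]
After this change of variables the bound becomes $R^2(1+R^2)\int_{R}^{\infty}\frac{dr}{(1+r^2)\sqrt{(r^2-R^2)(1+r^2+R^2)}}$.

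To finish I would use $1+r^2+R^2\geq 1+2R^2=\cosh 2\lambda$ on the range of integration and evaluate the remaining elementary integral $\int_{R}^{\infty}\frac{dr}{(1+r^2)\sqrt{r^2-R^2}}$ in closed form (e.g.\ by $r=R\sec\theta$ followed by $s=\sin\theta$), which yields
\[
   f(\lambda)\leq
   \frac{\sinh^2\lambda\,\cosh\lambda}{2\sqrt{\cosh 2\lambda}}\,
   \ln\frac{\cosh\lambda+1}{\cosh\lambda-1}\ .
\]
A routine expansion of the right-hand side as $\lambda\to\infty$, using $\ln\frac{\cosh\lambda+1}{\cosh\lambda-1}\sim 4e^{-\lambda}$, gives an upper bound of order $\frac{1}{2\sqrt2}\,e^{\lambda}$, whereas $g(\lambda)\sim\tfrac12 e^{\lambda}$; since $\frac{1}{2\sqrt2}<\frac12$, the inequality $f(\lambda)<g(\lambda)$ holds for all sufficiently large $\lambda$. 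The main obstacle, and the step I expect to require the most care, is precisely locating the combination of the conjugate rationalization and the substitution $r=\sinh t$ that makes the singular integral elementary; once the factorization above is in hand the remaining estimates are standard, and the whole comparison collapses to the clean numerical inequality $\tfrac{1}{2\sqrt2}<\tfrac12$.
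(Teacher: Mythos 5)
Your proof is correct, but it takes a genuinely different route from the paper's. The paper shifts the variable ($t\mapsto t+\lambda$), observes that the kernel $\sinh(2t+2\lambda)/\sqrt{\sinh^2(2t+2\lambda)-\sinh^2(2\lambda)}$ is increasing in $\lambda$, and passes to the $\lambda\to\infty$ limit to get the uniform bound $f(\lambda)<K\cosh\lambda$ with
\begin{equation*}
   K=\int_{0}^{1}\frac{1}{x^{2}}\left(\frac{1}{\sqrt{1-x^{4}}}-1\right)dx<1\ ,
\end{equation*}
from which part (2) follows with the explicit threshold $\Lambda_{0}=\cosh^{-1}\bigl(\frac{1}{1-K}\bigr)\approx 1.10055$ --- the constant quoted in Theorem \ref{thm:main theorem}. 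You instead rationalize by the conjugate, substitute $r=\sinh t$ via the factorization $\sinh^{2}2t-\sinh^{2}2\lambda=4(r^{2}-R^{2})(1+r^{2}+R^{2})$ with $R=\sinh\lambda$, and evaluate the resulting elementary integral in closed form; I verified that $\int_{R}^{\infty}\frac{dr}{(1+r^{2})\sqrt{r^{2}-R^{2}}}=\frac{1}{2\cosh\lambda}\ln\frac{\cosh\lambda+1}{\cosh\lambda-1}$, so your majorant and the asymptotics $\frac{1}{2\sqrt{2}}e^{\lambda}$ versus $g(\lambda)\sim\frac{1}{2}e^{\lambda}$ are right, and your endpoint analysis in part (1) (singularity of order $(t-\lambda)^{-1/2}$ at $t=\lambda$, decay of order $e^{-3t}$ at infinity) is also sound. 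What each approach buys: yours is completely elementary and self-contained --- no elliptic integral appears, no numerical evaluation of $K$ is needed, and well-definedness is read off transparently from the rationalized integrand --- whereas the paper's bound is quantitatively sharper: your majorant grows like $\frac{1}{2\sqrt{2}}e^{\lambda}\approx 0.354\,e^{\lambda}$ against the paper's effective $\frac{K}{2}e^{\lambda}\approx 0.200\,e^{\lambda}$, so the explicit threshold extractable from your estimate (the majorant drops below $g$ only around $\lambda\approx 1.8$) is weaker than $\Lambda_{0}\approx 1.10055$. Since the lemma asserts the inequality only for sufficiently large $\lambda$, your proof fully establishes it; but to recover the specific numerical constant $\Lambda_{0}$ used in the main theorem, the paper's uniform bound $f(\lambda)<K\cosh\lambda$ is the one required.
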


\begin{proof}(1) Using the substitution $t\to{}t+a$, we have
\begin{equation*}
   f(a)=\int_{0}^{\infty}\sinh(a+t)
   \left(\frac{\sinh(2a+2t)}{\sqrt{\sinh^2(2a+2t)-
   \sinh^2(2a)}}-1\right)dt\ .
\end{equation*}
We will prove that $f(a)<K\cosh{}a$, where
\begin{equation}\label{eq:limit of the ratio}
   K=\int_{0}^{1}\frac{1}{x^2}
          \left(\frac{1}{\sqrt{1-x^4}}-1\right)dx
\end{equation}
is a constant between $0$ and $1$.

Let $\displaystyle\Phi(a,t)=\frac{\sinh(2a+2t)}
{\sqrt{\sinh^2(2a+2t)-\sinh^2(2a)}}$, then
for any fixed $t\in[0,\infty)$, it's easy to
verify that $\Phi(a,t)$ is increasing on $[0,\infty)$
with respect to $a$. So
we have the estimate
\begin{align*}
   \Phi(a,t)   &\leq\lim_{a\to\infty}
                    \frac{\sinh(2a+2t)}
                    {\sqrt{\sinh^2(2a+2t)-\sinh^2(2a)}}\\
               &=\frac{\sinh(2t)+\cosh(2t)}{\sqrt{(\sinh(2t)+
                 \cosh(2t))^2-1}}\\
               &=\frac{e^{2t}}{\sqrt{e^{4t}-1}}
                =\frac{1}{\sqrt{1-e^{-4t}}}\ .
\end{align*}
Besides,
$\sinh(a+t)<(\sinh{}t+\cosh{}t)\cosh{}a=e^{t}\cosh{}a$,
therefore we have the following estimate
\begin{align*}
    f(a)
     &<\cosh{}a\int_{0}^{\infty}e^t\left(\frac{e^{2t}}
       {\sqrt{e^{4t}-1}}-1\right)dt\\
     &=\cosh{}a\int_{0}^{\infty}e^t\left(\frac{1}
       {\sqrt{1-e^{-4t}}}-1\right)dt\\
     &=\cosh{}a\int_{0}^{1}\frac{1}{x^2}
       \left(\frac{1}{\sqrt{1-x^4}}-1\right)dx
       \quad (t\mapsto{}x=e^{-t})
\end{align*}
Since $x^2+1\geq{}1$, we have
\begin{equation*}
\begin{aligned}
   K&=\int_{0}^{1}\frac{1}{x^2}\left(\frac{1}{\sqrt{1-x^4}}-1\right)dx\\
    &<\int_{0}^{1}\frac{1}{x^2}\left(\frac{1}{\sqrt{1-x^2}}-1\right)dx
     =1\ ,
    \end{aligned}
\end{equation*}
where we use the substitution $x\to{}\sin{}x$ to evaluate the second
integral in \eqref{eq:limit of the ratio}.

(2) We have proved that
$f(a)<K\cosh{}a$ for any $a\in[0,\infty)$. Let
\begin{equation}\label{eq:lambda_c}
   a_{l}=\cosh^{-1}\left(\frac{1}{1-K}\right)\ ,
\end{equation}
then $f(a)<g(a)$ if $a\geq{}a_l$.
\end{proof}

\begin{remark}The function $f(a)$ in
\eqref{eq: area difference function} has its geometric meaning:
$2\pi{}f(a)$ is the difference of the \emph{infinite area}
of one half of the catenoid $\CC_a$ and that of the annulus
\begin{equation*}
   \mathcal{A}=\{(0,v,w)\in\B^3\ |\ \tanh(a/2)
   \leq\sqrt{v^2+w^2}<1\}\ .
\end{equation*}
\end{remark}

\begin{remark}
The first definite integral in \eqref{eq:limit of the ratio} is an elliptic
integral. By the numerical computation, $K\approx{}0.40093$, and hence
$a_l\approx{}1.10055$.
\end{remark}

We need the coarea formula that will be used in the proof of
Theorem \ref{thm:main theorem}. The proof of
\eqref{eq:coarea formula I} in Lemma \ref{lem:coarea formula}
can be found in \cite{Wang10}.

\begin{lemma}[Calegari and Gabai\ {\cite[$\S$1]{CG06}}]
\label{lem:coarea formula}
Suppose $\Sigma$ is a surface in the hyperbolic $3$-space
$\B^3$. Let $\gamma\subset\B^3$ be a geodesic,
for any point $q\in\Sigma$, define $\theta(q)$ to be
the angle between the tangent space to $\Sigma$ at $q$, and
the radial geodesic that is through $q$
{\rm(}emanating from $\gamma${\rm)}
and is perpendicular to $\gamma$. Then
\begin{equation}\label{eq:coarea formula I}
   \Area(\Sigma\cap\Nscr_{s}(\gamma))=
   \int_{0}^{s}\int_{\Sigma\cap\partial\Nscr_{t}(\gamma)}
   \frac{1}{\cos\theta}\,dldt\ ,
\end{equation}
where $\Nscr_{s}(\gamma)$ is the hyperbolic $s$-neighborhood
of the geodesic $\gamma$.
\end{lemma}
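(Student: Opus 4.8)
The plan is to realize \eqref{eq:coarea formula I} as a direct application of the coarea formula to the restriction to $\Sigma$ of the ambient distance function to $\gamma$. Write $\rho\colon\B^3\to[0,\infty)$, $\rho(p)=\dist(p,\gamma)$, so that $\Nscr_t(\gamma)=\{\rho\leq{}t\}$ and $\partial\Nscr_t(\gamma)=\{\rho=t\}$. Since $\gamma$ is a complete geodesic in the simply connected space $\B^3$, which has no conjugate or focal points, the normal exponential map of $\gamma$ is a diffeomorphism onto $\B^3\setminus\gamma$; hence $\rho$ is smooth there and is a genuine distance function, so its ambient gradient $\nabla\rho$ is the unit vector field tangent at each point $q$ to the radial geodesic issuing perpendicularly from $\gamma$ through $q$, with $|\nabla\rho|\equiv 1$. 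Setting $f=\rho|_\Sigma$, the level sets of $f$ are exactly the curves $\Sigma\cap\partial\Nscr_t(\gamma)$, while $\{0\leq f\leq s\}=\Sigma\cap\Nscr_s(\gamma)$.

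The conceptual core is the identification of the weight $1/\cos\theta$ with $1/|\nabla_\Sigma f|$. At a point $q\in\Sigma$ the intrinsic gradient $\nabla_\Sigma f$ is the orthogonal projection of $\nabla\rho$ onto the tangent plane $T_q\Sigma$. By definition $\theta(q)$ is the angle between the radial geodesic, i.e.\ the line $\R\,\nabla\rho$, and the plane $T_q\Sigma$, so the tangential component of the unit vector $\nabla\rho$ has length $\cos\theta(q)$; that is, $|\nabla_\Sigma f|=\cos\theta$. This is the only geometric input, and it is elementary once $\nabla\rho$ is recognized as the unit radial field.

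With these ingredients the formula is immediate. Applying the coarea formula to $f$ on the region $\Sigma\cap\Nscr_s(\gamma)=\{0\leq f\leq s\}$ with the constant weight $g\equiv 1$ yields
\begin{equation*}
   \Area(\Sigma\cap\Nscr_s(\gamma))
   =\int_0^s\left(\int_{\{f=t\}}\frac{1}{|\nabla_\Sigma f|}\,dl\right)dt
   =\int_0^s\int_{\Sigma\cap\partial\Nscr_t(\gamma)}\frac{1}{\cos\theta}\,dl\,dt,
\end{equation*}
which is exactly \eqref{eq:coarea formula I}.

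I expect the only genuine subtleties to be regularity rather than geometry. First, one must have $f$ regular enough for the coarea formula: since $\rho$ is globally $1$-Lipschitz and smooth off $\gamma$, its restriction $f$ is Lipschitz on $\Sigma$ (even where $\Sigma$ crosses $\gamma$), which suffices. Second, one must control the tangency locus where $\Sigma$ touches an equidistant tube $\partial\Nscr_t(\gamma)$; there $\theta=\pi/2$, $\cos\theta$ vanishes, and the integrand blows up. This locus is precisely the critical set of $f$, so by Sard's theorem almost every $t\in[0,s]$ is a regular value and the corresponding level curve $\{f=t\}$ avoids it, making $1/\cos\theta$ finite and integrable there; the negligible set of critical values does not affect the outer integral.
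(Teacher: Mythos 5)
Your proof is correct and is essentially the argument the paper intends: the paper gives no proof of Lemma \ref{lem:coarea formula} at all, deferring to Calegari--Gabai and to \cite{Wang10} with the remark that it is ``very easy,'' and the intended route is precisely yours --- apply the standard coarea formula (which even appears, commented out, in the paper's source) to $f=\dist(\cdot,\gamma)|_{\Sigma}$, using that the normal exponential map off $\gamma$ makes $\rho$ a smooth distance function with unit radial gradient, so that $|\nabla_{\Sigma}f|=\cos\theta$. One caveat worth noting, though it is a defect of the lemma's statement rather than of your argument: Sard's theorem controls the critical \emph{values} but not the area of the critical \emph{set}, so for a completely arbitrary $\Sigma$ (say one containing an open piece of an equidistant tube $\partial\Nscr_{t}(\gamma)$, on which $\cos\theta\equiv 0$) the tangency locus can carry positive area that the right-hand side of \eqref{eq:coarea formula I} misses; in the paper's application $\Sigma$ is a piece of the real-analytic minimal catenoid, which meets the tubes tangentially only along the neck circle, so the critical set is negligible and your computation goes through verbatim.
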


Now we are able to prove Theorem \ref{thm:main theorem}.

\begin{theorem}\label{thm:main theorem}
There exists a constant $a_l\approx{}1.10055$ defined by \eqref{eq:lambda_c}
such that for any $a\geq{}a_l$ the catenoid $\CC_a$ is a least area
minimal surface in the sense of Meeks-Yau.
\end{theorem}

\begin{proof}[\bf{Proof of Theorem \ref{thm:main theorem}}]
First of all, suppose that $a\geq{}a_{l}$ is an
arbitrary constant.
Suppose that $\partial_{\infty}\CC_{a}=C_1\cup{}C_2$, and
let $P_{i}$ be the geodesic plane asymptotic to $C_{i}$
($i=1,2$). Let $\sigma_{a}=\CC_a\cap\B_{+}^2$
be the generating curve of the
catenoid $\CC_a$.

For $x\in(-d_{0}(a),d_{0}(a))$, let $P(x)$ be the
geodesic plane perpendicular to the $u$-axis such
that $\dist(O,P(x))=|x|$. Now let
\begin{equation}\label{eq:symmetric compact annulus}
   \Sigma=\bigcup_{|x|\leq{}x_{1}}
   \left(\CC_{a}\cap{}P(x)\right)\ ,
\end{equation}
for some $0<x_{1}<d_{0}(a)$.
Let $\partial\Sigma=C_{+}\cup{}C_{-}$. Note that $C_+$ and $C_-$ are
coaxial with respect to the $u$-axis. 

\begin{claim}\label{claim:one}
$\Area(\Sigma)<\Area(P_{+})+\Area(P_{-})$, where
$P_{\pm}$ are the compact subdomains of $P(\pm{}x_{1})$ that are
bounded by $C_{\pm}$ respectively
(see Fig.~\ref{fig:least area catenoid--lambda=1.2}).
\end{claim}

\begin{figure}[htbp]
  \begin{center}
     \includegraphics[scale=0.9]{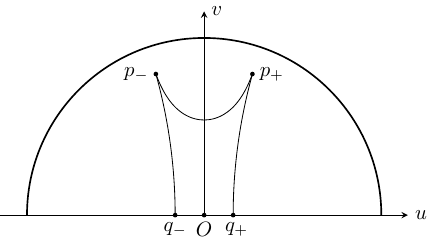}
  \end{center}
  \caption{The curve $\wideparen{p_{-}p_{+}}=\Sigma\cap\B_{+}^2$ is
  the portion of the catenary $\sigma_a$ with $a=1.2$.
  The curves $\wideparen{p_{+}q_{+}}=P_{+}\cap\B_{+}^2$ and
  $\wideparen{p_{-}q_{-}}=P_{-}\cap\B_{+}^2$.
  In this figure, $y_1=2.4$ and $x_1\approx{}0.330439$. By numerical
  computation: $\Area(\Sigma)=54.6636$ and
  $\Area(P_{+}\cup{}P_{-})=57.2643$.}
  \label{fig:least area catenoid--lambda=1.2}
\end{figure}

\begin{proof}[\bf{Proof of Claim \ref{claim:one}}]
Recall that $P_{\pm}$ are two (totally) geodesic disks with
hyperbolic radius $y_{1}$, so the area of $P_{\pm}$ is given by
\begin{equation}
   \Area(P_{+})=\Area(P_{-})=4\pi\sinh^{2}\left(
   \frac{y_{1}}{2}\right)=2\pi(\cosh{}y_{1}-1)\ ,
\end{equation}
here $(x_1,y_1)\in\sigma_a$ satisfies the equation
\eqref{eq: catenary equation}.

Recall that
$\Area(\Sigma)=\Area(\Sigma\cap\Nscr_{y_{1}}(\gamma_{0}))$,
by the co-area formula we have
\begin{equation}
  \Area(\Sigma)=\int_{a}^{y_{1}}\left(
  \Length(\Sigma\cap\partial\Nscr_{t}(\gamma_{0}))\cdot
  \frac{1}{\cos\theta}\right)dt\ ,
\end{equation}
where the angle $\theta$ is given by \eqref{eq:angle-alpha}, hence
\begin{equation}
  \Area(\Sigma)=\int_{a}^{y_{1}}\left(4\pi\sinh{}t\cdot
  \frac{\sinh(2t)}{\sqrt{\sinh^2(2t)-\sinh^2(2a)}}\right)dt\ .
\end{equation}
By Lemma \ref{lem:area compare}, for any
$a\geq{}a_{l}$ we have
\begin{equation*}
   \int_{a}^{\infty}\sinh{}t\cdot
   \left(\frac{\sinh(2t)}{\sqrt{\sinh^2(2t)-\sinh^2(2a)}}-1\right)dt
   <\cosh{}a-1\ ,
\end{equation*}
therefore for any $y_1\in(a,\infty)$ we have
\begin{equation*}
   4\pi\int_{a}^{y_1}\sinh{}t\cdot\left(\frac{\sinh(2t)}
   {\sqrt{\sinh^2(2t)-\sinh^2(2a)}}-1\right)dt
   <4\pi(\cosh{}a-1)\ ,
\end{equation*}
and then $\Area(\Sigma)<\Area(P_{+})+\Area(P_{-})$.
\end{proof}

\begin{claim}\label{claim:two}
There is no minimal annulus with the same boundary
as that of $\Sigma$ which has smaller area than that of $\Sigma$.
\end{claim}

\begin{proof}[\bf{Proof of Claim \ref{claim:two}}]
Recall that $a$ is an
arbitrary constant chosen to be $\geq{}a_{l}$.
Let $\Omega$ be the subregion of $\B^3$ bounded by
$P(-x_{1})$ and $P(x_{1})$, and let $\T_a$ be the simply connected
subregion of $\B^3$ bounded  by $\CC_a$.

Assume that $\Sigma'$ is a least area annulus with the same boundary
as that of $\Sigma$, and $\Area(\Sigma')<\Area(\Sigma)$. Since
$\Sigma'$ is a least area annulus, it must be a minimal surface.
By \cite[Theorem 5]{MY1982(t)} and
\cite[Theorem 1]{MY1982(mz)}, $\Sigma'$ must be contained in
$\Omega$, otherwise we can use cutting and pasting technique
to get a minimal surface contained in $\Omega$ that has
smaller area. Furthermore, recall that
$\{\CC_\alpha\}_{\alpha\geq{}a_c}$ locally foliates
$\Omega\subset\B^3$, therefore $\Sigma'$ must be contained
in $\T_a\cap\Omega$ by the Maximum Principle. It's easy to
verify that the boundary of $\T_a\cap\Omega$ is given by
$\partial(\T_a\cap\Omega)=\Sigma\cup{}P_{+}\cup{}P_{-}$.

Now we claim that $\Sigma'$ is symmetric about any geodesic
plane that passes through the $u$-axis, i.e., $\Sigma'$ is
a surface of revolution. Otherwise, using the reflection
along the geodesic planes that pass through the $u$-axis,
we can find another annulus $\Sigma''$ with
$\partial\Sigma''=\partial\Sigma'$ such that either
$\Area(\Sigma'')<\Area(\Sigma')$ or $\Sigma''$ contains
\emph{folding curves} 
so that we can find smaller area annulus by
the argument in \cite[pp. 418--419]{MY1982(t)}.
Similarly, $\Sigma'$ is symmetric about the $vw$-plane.

Now let $\sigma'=\Sigma'\cap\B_{+}^{2}$, then $\sigma'$
must satisfy the
equations \eqref{eq:differential equations of Pi} for some constant
$a'>0$, which may imply that $\Sigma'$ is a compact subdomain
of some catenoid $\CC_{a'}$ (see
Fig.~\ref{fig:least area catenoid--lambda=1.2 and 0.14341}).
Obviously $\CC_{a'}\cap\CC_{a}=C_{+}\cup{}C_{-}$.

\begin{figure}[htbp]
  \begin{center}
     \includegraphics[scale=0.9]{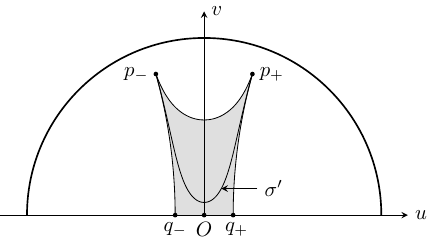}
  \end{center}
  \caption{The shaded region is equal to
  $(\T_a\cap\Omega)\cap\B^{2}_{+}$.
  If $\Sigma'\ne\Sigma$, then $\sigma'=\Sigma'\cap\B_{+}^{2}$ is the
  portion of some catenary $\sigma_{a'}$ with
  $a'<a_c$ and then $\Sigma'$ is unstable.}
  \label{fig:least area catenoid--lambda=1.2 and 0.14341}
\end{figure}

Since $\Sigma'\subset\T_a\cap\Omega$, we have $a'\leq{}a$.
We claim that if $a'<a$, that is $\Sigma'$ is not the same
as $\Sigma$, then it must be unstable, which implies a contradiction.
In fact, if $a'<a$, since $d_0(a')<d_0(a)<d_{0}(a_c)$,
we have $a'<a_c$ by Proposition~\ref{thm:Berard-Sa Earp}, which
implies that $\CC_{a'}$ is unstable. Besides, according to
Proposition \ref{thm:small maximally weakly stable domain},
the subdomain $\Sigma'$ of $\CC_{a'}$ is also unstable,
so it couldn't be a least area
minimal surface unless $\Sigma'\equiv\Sigma$.
Therefore any compact annulus of the form
\eqref{eq:symmetric compact annulus}
is a least area minimal surface.
\end{proof}

Now let $S$ be any compact domain of $\CC_a$, then we
always can find a compact annulus $\Sigma$ of the form
\eqref{eq:symmetric compact annulus} such that $S\subset\Sigma$.
If $S$ is not a least area minimal surface, then we can
use the cutting and
pasting technique to show that $\Sigma$ is not a least
area minimal surface.
This is contradicted to the above argument.

Therefore if $a\geq{}a_l$, then $\CC_a$ is a least area minimal
surface in the sense of Meeks-Yau.
\end{proof}

\begin{remark}In the proof of \emph{Claim 2} in
Theorem \ref{thm:main theorem}, if $\Sigma'$ is an annulus type
minimal surface but it is not a least area
minimal surface, then it might not be a surface of revolution
(see \cite[p. 234]{Lop00}).
\end{remark}

\begin{corollary}
There exists a finite constant
$D_{l}=2d_0(a_l)\approx{}0.729183$ such that for two disjoint
circles $C_{1},C_{2}\subset{}S_{\infty}^{2}$, if
$d_{L}(C_{1},C_{2})\leq{}D_{l}$, then there exist a least area
spherical minimal catenoid $\CC$ which is asymptotic to
$C_{1}\cup{}C_{2}$.
\end{corollary}

\bibliographystyle{amsplain}
\bibliography{ref_catenoid}
\end{document}